\theoremstyle{plain}
\newtheorem{thm}{Theorem}[section]
\newtheorem{theorem}[thm]{Theorem}
\newtheorem{lemma}[thm]{Lemma}
\newtheorem{proposition}[thm]{Proposition}
\newtheorem{corollary}[thm]{Corollary}
\theoremstyle{definition}
\newtheorem{definition}[thm]{Definition}
\newtheorem{remark}[thm]{Remark}
\newtheorem{example}[thm]{Example}
\newtheorem{notation}[thm]{Notation}
\newtheorem{thevarthm}[thm]{\varthmname}
\newenvironment{varthm*}[1]{\trivlist\item[]{\bf #1.}\it}{\endtrivlist}
\newcommand\be{\begin{eqnarray*}}
\newcommand\ee{\end{eqnarray*}}
\newcommand\C{\mathbb C}
\DeclareMathOperator{\GL}{GL}
\newcommand\Z{\mathbb Z}
\newcommand\R{\mathbb R}
\newcommand\cH{\mathcal H}
\newcommand\cB{\mathcal B}
\newcommand\cD{\mathcal D}
\newcommand\cF{\mathcal F}
\renewcommand\H{\mathbb H}
\renewcommand\P{\mathbb P}
\newcommand\newop[2]{\def#1{\mathop{\rm #2}\nolimits}}
\newop\edim{edim}
\newop\Zeroes{Zeroes}
\newop\Jac{Jac}
\newop\Ass{Ass}
\newop\SL{SL}
\newop\PGL{{\P}GL}
\newop\Km{Km}
\newop\reg{reg}
\newop\mult{mult}
\newop\alphahat{\widehat{\alpha}}
\newcommand\eqnref[1]{(\ref{#1})}
\newcolumntype{L}{>{$}l<{$}}
\renewcommand\keywords[1]{{\renewcommand\thefootnote{}\footnotetext{\textit{Keywords:} #1.}}}
\newcommand\subclass[1]{{\renewcommand\thefootnote{}\footnotetext{\textit{Mathematics Subject Classification (2010):} #1.}}}
\def\endproof{\hspace*{\fill}\endproofsymbol\endtrivlist}
\def\endproofsymbol{\frame{\rule[0pt]{0pt}{6pt}\rule[0pt]{6pt}{0pt}}}
\begin{document}

\author[S. Calvo]{Sebastian Calvo}
\address{Department of Mathematics, Towson University, Towson, MD 21252}
\email{scalvo@towson.edu}

\author[J. Huizenga]{Jack Huizenga}
\address{Department of Mathematics, The Pennsylvania State University, University Park, PA 16802}
\email{huizenga@psu.edu}

\author[T. Szemberg]{Tomasz Szemberg}
\address{Department of Mathematics, University of the National Education Commission, PL-30-084 Krakow, Poland}
\email{tomasz.szemberg@gmail.com}

\title{Waldschmidt constants of symmetric sets of points in $\P^3$}
\date{\today}

\thispagestyle{empty}

\begin{abstract}
Configurations of points defined by complex reflection groups have attracted a lot of attention recently in several directions of research, e.g., the containment problem between ordinary and symbolic powers of ideals, in the theory of unexpected hypersurfaces, in the study of sets of points whose general projections are complete intersections and in the ideas revolving around the Bounded Negativity Conjecture. In order to understand these configurations better several attempts have been undertaken to compute their various invariants. In this paper we focus on their Waldschmidt constants. In the case of plane configurations of points determined by reflection groups most (but not all) Waldschmidt constants are known. Here we pass to configurations in $\P^3$ where new ideas are required as the identification between divisors and curves is no longer available.  In particular, we precisely compute the Waldschmidt constant for configurations of points in $\P^3$ coming from the $D_4,B_4,F_4$, and $H_4$ root systems.
\end{abstract}
\keywords{linear series, reflection group, Waldschmidt constant}
\subclass{14C20, 14N20, 13A50, 52C35}
\maketitle
\section{Introduction}
Finite sets of points have been intensively studied ever since the inception of geometry. A careful systematic treatment of both planar and three dimensional geometry by Euclid \cite{Euclid} culminated in the classification of Platonic solids. Their vertices form remarkably symmetric sets of points in space. The introduction of algebraic methods to geometry by Ren\'e Descartes in the 17th century \cite{Descartes} led eventually to the creation of algebraic geometry and created the possibility of attaching  various algebraic invariants to finite sets of points in particular. One of these invariants which has recently attracted a lot of attention is the Waldschmidt constant, see \cite{Calvo2024}, \cite{DSS2024}, \cite{BisuiNguyen2024}, \cite{Many2022}, \cite{ChangJow2020}, \cite{BDHHSS2019}. It can be defined for arbitrary non-trivial ideals in a graded ring, but we restrict here our attention to ideals of points, where it can be stated in a considerably simpler way than in the general set-up thanks to the Zariski-Nagata Theorem, see \cite[Theorem 3.14]{Eisenbud}.
\begin{definition}[Waldschmidt constant of a finite set of points]\label{def: WC} Let $Z\subset \P^N$ be a finite set of points and let $I=I(Z) \subset S = \C[x_0,\ldots,x_N]$ be its homogeneous ideal, so
$$I(Z)=\bigcap_{P\in Z}I(P).$$
We write $$I^{(m)}=\bigcap_{P\in Z}(I(P))^m$$ for the $m$th symbolic power of $I$, 
and $\alpha(I^{(m)})$ for the least degree of a non-zero polynomial in $I^{(m)}$.  Then the \emph{Waldschmidt constant of $Z$} is 
\begin{equation}\label{eq: WC}
    \alphahat(Z)=\inf_{m\in\Z_{+}}\frac{\alpha(I^{(m)})}{m},
\end{equation}   

\end{definition}
\begin{remark}
It is well known that the Waldschmidt constant is actually the limit of the sequence in \eqref{eq: WC} and that there is a universal upper bound
$$\alphahat(I)\leq\sqrt[N]{\#Z}$$
valid in the setting of Definition \ref{def: WC}.
\end{remark}

\subsection*{Symmetric sets of points} In \cite{BDHHSS2019} the authors studied Waldschmidt constants of sets of points arising as singular loci of certain extreme line arrangements in $\P^2$. More specifically the Klein and the Wiman arrangements. These configurations arose from the action of certain complex reflection groups on $\P^2$. The group action can be exploited to greatly simplify the computation of the Waldschmidt constant of a collection of points which is preserved by the group action. Note however that this is not sufficient to handle all details, in particular the Waldschmidt constant of the Klein configuration remains unknown, see \cite{BDHHSS2019}. A particularly nice aspect of the computation of Waldschmidt constants on surfaces is that hypersurfaces are curves.  This makes it so that the computation of the Waldschmidt constant is closely connected to the study of nef divisor classes on the surface blown up at the point configuration.

Here we turn our attention to several highly symmetric sets of points in $\P^3$.  In contrast with the surface case, the fact that divisors are now surfaces instead of curves typically makes the computation of the Waldschmidt constant a much more challenging problem.  However, we will see that the presence of a large group of symmetries puts the problem within reach.

\subsection*{The $\cD_4$, $\cB_4$, and $\cF_4$ configurations} The point configurations in this paper all arise as the projectivizations of root systems in $\R^4$.  We first consider three related sets of points in $\P^3$.  We have root systems $$D_4\subset B_4\subset F_4 \subset \R^3$$ with respectively $24, 32$, and $48$ vectors.  For example, the root system $D_4$ consists of the vertices of a regular $24$-cell.  Projectivizing these collections, we obtain sets $$\cD_4\subset \cB_4\subset \cF_4\subset \P^3$$ with $12, 16,$ and $24$ points.  

\begin{theorem}\label{thm: main}
We have $$\alphahat(\cD_4)=\alphahat(\cB_4)=2 \quad \textrm{and} \quad \alphahat(\cF_4) = \frac{8}{3}.$$
\end{theorem}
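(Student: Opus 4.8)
The plan is to establish each equality by matching an explicit upper bound with a lower bound, using the concrete realization of the configurations as projectivized roots. Fix coordinates $x_0,\dots,x_3$ on $\P^3$ and let $e_0,\dots,e_3$ denote the coordinate points; then $\cD_4=\{[e_i\pm e_j]:0\le i<j\le 3\}$ ($12$ points), $\cB_4=\cD_4\cup\{[e_0],[e_1],[e_2],[e_3]\}$ ($16$ points), and $\cF_4=\cB_4\cup\{[\pm1:\pm1:\pm1:\pm1]\}$ ($24$ points). The upper bounds come from products of reflecting hyperplanes. At every point of $\cD_4$ exactly two coordinates vanish, so $x_0x_1x_2x_3\in I(\cD_4)^{(2)}$ and $\alphahat(\cD_4)\le\tfrac42=2$; at each $[e_i]$ three coordinates vanish, so the same quartic lies in $I(\cB_4)^{(2)}$ and $\alphahat(\cB_4)\le 2$. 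For $\cF_4$ one checks that every one of the $24$ points lies on exactly $9$ of the $24$ reflecting hyperplanes of $W(F_4)$ --- the $12$ ``long'' ones $x_i=\pm x_j$ together with the $12$ ``short'' ones $x_i=0$ and $x_0\pm x_1\pm x_2\pm x_3=0$ --- so the product $J$ of all $24$ of them (a $W(F_4)$-semi-invariant of degree $24$) lies in $I(\cF_4)^{(9)}$, whence $\alphahat(\cF_4)\le\tfrac{24}{9}=\tfrac83$.

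For the lower bound on $\cD_4$ I would first analyze the plane section: $\cD_4\cap\{x_i=0\}$ is the set $\mathcal Q$ of six points $[e_j\pm e_k]$ with $j,k\ne i$, a complete quadrilateral in $\P^2$ whose four lines each contain three of the six points, each point lying on two lines. If a plane curve of degree $d$ has multiplicity $\ge m$ at all six points and contains none of the four lines, B\'ezout forces $d\ge 3m$; if it contains all four, dividing by their product gives $d\ge 4+\alpha(I(\mathcal Q)^{(m-2)})$. Together with $\alpha(I(\mathcal Q))=3$ (no conic passes through the six points) an easy induction yields $\alpha(I(\mathcal Q)^{(m)})\ge 2m$ for every $m$. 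Now for $F\in I(\cD_4)^{(m)}$ of degree $d$: either $F|_{\{x_i=0\}}\ne 0$ for some $i$, and then $d\ge\alpha(I(\mathcal Q)^{(m)})\ge 2m$, or $x_0x_1x_2x_3\mid F$, and then, since that product has multiplicity exactly $2$ at each point of $\cD_4$, writing $F=x_0x_1x_2x_3\,G$ gives $G\in I(\cD_4)^{(m-2)}$ and $d\ge 4+\alpha(I(\cD_4)^{(m-2)})$. Induction (base case $\alpha(I(\cD_4))=3$) gives $\alpha(I(\cD_4)^{(m)})\ge 2m$ for all $m$, so $\alphahat(\cD_4)=2$. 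Since $\cD_4\subset\cB_4$, monotonicity of $\alphahat$ in the point set gives $\alphahat(\cB_4)\ge\alphahat(\cD_4)=2$, hence $\alphahat(\cB_4)=2$.

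The lower bound $\alphahat(\cF_4)\ge\tfrac83$ is the crux, and it is here that --- as the introduction warns --- one can no longer trade curves for divisors. Restricting $F\in I(\cF_4)^{(m)}$ to a reflecting hyperplane yields only the planar $B_3$ configuration of $9$ points, through which one can already exhibit curves of degree $\tfrac52 m$ with multiplicity $m$, so its Waldschmidt constant is at most $\tfrac52<\tfrac83$ and the ``restrict and strip'' scheme that works for $\cD_4$ is not enough. Instead I would pass to the blow-up $\pi\colon X\to\P^3$ of the $24$ points, where $\alpha(I(\cF_4)^{(m)})=\min\{d:dH-m\sum_{i=1}^{24}E_i\text{ is effective}\}$, and construct a \emph{movable} curve class of the form $\Gamma=9H^2-\sum_{i=1}^{24}\ell_i$, where $H^2$ is the class of a line and $\ell_i$ that of a line in the exceptional divisor $E_i$. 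Pairing $\Gamma$ with an effective class $dH-m\sum E_i$ gives $9d-24m\ge 0$, i.e.\ $d\ge\tfrac83 m$, whence $\alphahat(\cF_4)\ge\tfrac83$; note that $\Gamma$ meets the divisor $24H-9\sum E_i$ cut out by $J$ in degree $0$, which pins down $\tfrac83$ as the extremal slope. Concretely, $\Gamma$ (or a positive multiple) should be realized by a dominating family of curves through the configuration: for instance a $W(F_4)$-orbit of rational cubics, each through an $8$-point subset of $\cF_4$ chosen so that every point is hit equally often, or a family of degree-$9$ curves through all $24$ points; the geproci property of $\cF_4$ (its general plane projection is a complete intersection) is a natural source of such curves. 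The main obstacle is precisely this construction --- producing the auxiliary family and proving it moves enough to place $\Gamma$ in the movable cone (equivalently, in the dual of the pseudoeffective cone), something automatic on a surface but not in $\P^3$. Granting it, the two bounds meet and $\alphahat(\cF_4)=\tfrac83$.
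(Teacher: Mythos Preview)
Your arguments for $\cD_4$ and $\cB_4$ are correct and match the paper's approach: you prove the star-configuration bound $\alpha(I(\mathcal Q)^{(m)})\ge 2m$ by hand where the paper simply cites it, but the restrict-or-strip-off-coordinate-planes induction is the same. The $\cF_4$ upper bound via the $24$ reflecting hyperplanes is likewise identical.

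The $\cF_4$ lower bound, however, is a genuine gap. You propose a movable curve class $\Gamma=9H^2-\sum_i\ell_i$ on the blow-up and pairing against effective divisors, but --- as you yourself concede --- you do not construct a covering family realizing $\Gamma$, and neither of your candidate descriptions (an orbit of cubics through balanced $8$-point subsets, or degree-$9$ curves through all $24$ points) is substantiated. Whether such a class lies in the closure of the movable cone is essentially equivalent to the inequality $\alphahat(\cF_4)\ge\tfrac83$ you are trying to prove, so without an actual construction the argument is empty.

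The paper takes a different route, and notably it \emph{does} work by restriction to a reflecting hyperplane $H=\{w=0\}$, contrary to your dismissal of that strategy. The point is that one does not use only the nine points $\cF_4\cap H$: one also tracks four auxiliary points $Z_1,\dots,Z_4\in H$, the traces in $H$ of the $3$-secant lines of $\cF_4$ transversal to $H$. A B\'ezout argument on a general plane through such a line shows that any surface $S$ with $d/m<\tfrac83$ vanishes along each $3$-secant line to a computable order, which forces extra multiplicity of $S|_H$ at the $Z_i$. Then (taking $m=18p$, $p$ odd) one successively strips from $S|_H$ multiples of three explicit plane divisors --- the four-line star $\sigma$, the triangle $\tau$, and the complete quadrangle $\varphi$ on $Z$ --- tracking multiplicities at all three groups of points, until the residual curve is numerically impossible. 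Hence $S\supset H$, by symmetry $S$ contains the full arrangement $\H$, and descent on $(d,m)$ yields a contradiction. The auxiliary points $Z_i$ are precisely what rescues the plane-restriction approach that you discarded as too weak.
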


In each of these cases, there is a natural configuration of hyperplanes containing the point configuration.  The product of their equations computes the Waldschmidt constant.  The challenging part of the argument, especially in the case of $\cF_4$, is a proof that there are no hypersurfaces computing a smaller Waldschmidt ratio.  The definitions of these configurations and the computation of their Waldschmidt constants will be undertaken in Sections \ref{sec: D4}, \ref{sec: B4}, and \ref{sec: F4}.

\subsection*{The $\cH_4$ configuration} The root system $H_4\subset \R^4$ is a collection of 120 vectors in $\R^4$ which can be viewed as the vertices of a regular $600$-cell.  Its projectivization $\cH_4\subset \P^3$ is a collection of $60$ points.  In contrast with previous cases, the Waldschmidt constant is computed by a very interesting hypersurface, and we need new methods to compute it.

There is a complex reflection group $G$ of order $14400$ acting on $\P^3$, and the points in $\cH_4$ form a single orbit under this group action.  One of these points is the point $(0:0:0:1)$ with homogeneous ideal $I = (x,y,z)$.  By an ``averaging'' process, it is easy to see that the Waldschmidt constant of $\cH_4$ can be computed by only considering $G$-invariant polynomials.   Since $G$ is a complex reflection group, the ring $T$ of $G$-invariant polynomials is itself a polynomial algebra $$T = S^G = \C[f_2,f_{12},f_{20},f_{30}] \subset S$$ generated by algebraically independent polynomials of degrees $2,12,20,30$.  There is then an invariant polynomial of degree $d$ having multiplicity exactly $m$ at all the points in $\cH_4$ precisely when the vector space $$\frac{T_d \cap I^m}{T_d \cap I^{m+1}}$$ is nonzero.  These vector spaces can be packaged together into a ring $$R = \bigoplus_{m\geq 0} \frac{T\cap I^m}{T\cap I^{m+1}}$$ which is bigraded by both the degree $d$ and multiplicity $m$.  Thus the Waldschmidt constant can be computed as $$\alphahat(\cH_4) = \inf \left\{\frac{d}{m}:R_{d,m}\neq 0 \right\}.$$  By working explicitly with the group $G$, we are able to show the following.
\begin{theorem}\label{thm: intro-H4}
The ring $R$ is finitely generated by a list of $12$ explicit generators.  Among the generators, the generator of bidegree $(36,10)$ has the smallest ratio $\frac{d}{m}$.  Therefore, $$\alphahat(\cH_4)=\frac{36}{10}.$$ Furthermore, the bigraded Hilbert series of $R$ can be explicitly computed, so the dimensions of the graded pieces $R_{d,m}$ can be efficiently determined.
\end{theorem}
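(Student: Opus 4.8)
The plan is to pin down the bigraded ring $R$ completely and explicitly, leaning on the classical invariant theory of the reflection group $G$. First I would fix explicit fundamental invariants $f_2,f_{12},f_{20},f_{30}$ of $T=S^G$ in suitable coordinates (with $f_2$ the invariant quadric, normalized so that $f_2(0{:}0{:}0{:}1)\neq 0$), and identify $\operatorname{gr}_I S$ with the bigraded polynomial ring $\C[w][\bar x,\bar y,\bar z]$, where $w$ has bidegree $(1,0)$ and $\bar x,\bar y,\bar z$ have bidegree $(1,1)$. Because $\operatorname{gr}_I S$ is a domain, passage to $I$-adic initial forms is multiplicative, so $R$ is literally the subalgebra of $\operatorname{gr}_I S$ spanned by the initial forms of the elements of $T$; in particular $R_{d,m}\neq 0$ exactly when some invariant of degree $d$ vanishes to order exactly $m$ at $p=(0{:}0{:}0{:}1)$. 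The preliminary step is to compute $\operatorname{in}(f_2)$, which has bidegree $(2,0)$, and $\operatorname{in}(f_{12}),\operatorname{in}(f_{20}),\operatorname{in}(f_{30})$, which have positive multiplicity, and — crucially — to record how the order of vanishing at $p$ jumps on products and on $T$-linear combinations, since it is precisely this jumping that stops $R$ from being generated by the four forms $\operatorname{in}(f_i)$ alone.

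The heart of the argument is to exhibit a finite generating set of $R$ and prove it generates. The systematic tool is an initial-algebra (``SAGBI-type'') computation relative to the valuation $\operatorname{ord}_p$: start from $\operatorname{in}(f_2),\operatorname{in}(f_{12}),\operatorname{in}(f_{20}),\operatorname{in}(f_{30})$, repeatedly form the algebraic (S-pair-type) relations among the current generators, subduct them against the current set, and adjoin the initial forms of the nonzero remainders; the claim is that this closes up after finitely many steps and returns the asserted list of twelve bihomogeneous generators, $\operatorname{in}(f_2)$ among them. Equivalently — and this is presumably how the ``explicit'' list is produced — one writes down twelve explicit invariant forms, passes to their initial forms, and verifies generation either via the SAGBI subduction criterion, or by checking degree by degree up to an effective bound that the bigraded Hilbert function of the subalgebra they generate agrees with that of $R$, whose total-degree slices satisfy $\sum_m\dim R_{d,m}=\dim T_d$ (the coefficients of the Molien series $\frac{1}{(1-s^2)(1-s^{12})(1-s^{20})(1-s^{30})}$), and then invoking Noetherianity of $T$ to terminate the check. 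I expect this finite-generation step to be the main obstacle: initial algebras of subalgebras need not be finitely generated, so there is no shortcut around actually carrying out the computation and certifying that it closes, and it is here that the arithmetic of the $H_4$ invariants — the specific vanishing orders that they and their products acquire at a point of $\cH_4$ — does the essential work.

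Granting the twelve generators with bidegrees $(d_i,m_i)$, the Waldschmidt statement is then formal. Every nonzero $R_{d,m}$ is spanned by bihomogeneous monomials in the generators, so $(d,m)=\sum_i a_i(d_i,m_i)$ with $a_i\in\Z_{\geq 0}$ for at least one such (nonzero) monomial; using $d_i>0$ for all $i$ and $d_i\geq \rho\,m_i$ whenever $m_i>0$, with $\rho=\min_{i:\,m_i>0}\frac{d_i}{m_i}$, we get $\sum a_i d_i\geq \rho\sum a_i m_i$, i.e. $\frac{d}{m}\geq\rho$ for every $m>0$ (the generator $\operatorname{in}(f_2)$, having $m_i=0$, can only raise $\frac{d}{m}$ when it occurs). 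Inspecting the twelve bidegrees shows that $\rho$ is attained exactly at the generator of bidegree $(36,10)$, so $\rho=\frac{36}{10}$, and since that generator is a genuine nonzero element of $R$ this value is attained. Combined with the identity $\alphahat(\cH_4)=\inf\{\,d/m:R_{d,m}\neq 0\,\}$ noted above, this gives $\alphahat(\cH_4)=\frac{36}{10}$.

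Finally, with the twelve generators and the ideal of relations among them (obtained alongside the generators) presented as a bigraded ideal in the polynomial ring on twelve variables of bidegrees $(d_i,m_i)$, the bigraded Hilbert series $H_R(s,t)=\sum_{d,m}\dim R_{d,m}\,s^d t^m$ is the rational function given by the usual alternating sum over a minimal bigraded free resolution; specializing $t=1$ must return $\frac{1}{(1-s^2)(1-s^{12})(1-s^{20})(1-s^{30})}$, the Hilbert series of $T$, which furnishes a consistency check, and extracting coefficients then yields each $\dim R_{d,m}$ effectively.
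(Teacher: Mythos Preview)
Your proposal is correct and follows the same overall strategy as the paper: embed $R$ into the associated graded ring of $S$ at $I$, exhibit twelve explicit generators as initial forms of invariants, verify generation via the Hilbert-series identity $\sum_m \dim R_{d,m}=\dim T_d$, and then read off $\alphahat(\cH_4)$ as the minimal slope $d/m$ among the generator bidegrees.

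The one substantive ingredient you are missing is the role of the stabilizer $G_p\cong W(H_3)$. The paper observes that the initial form at $p$ of any $G$-invariant is automatically $G_p$-invariant, so $R$ embeds not merely in $\operatorname{gr}_I S=\C[\bar x,\bar y,\bar z,w]$ but in the much smaller polynomial subring $\C[s_2,s_6,s_{10},w]$, where $s_2,s_6,s_{10}$ are the icosahedral invariants in $\bar x,\bar y,\bar z$. Because $s_2,s_6,s_{10},w$ are algebraically independent, one can treat them as bigraded indeterminates; the Hilbert series of the candidate subalgebra $R'$ is then computed once and for all as a rational function (as the Hilbert series of a quotient of a polynomial ring in twelve variables), and the verification that its specialization at $v=1$ equals $1/((1-u^2)(1-u^{12})(1-u^{20})(1-u^{30}))$ is an exact identity of rational functions, with no ``effective bound'' or termination argument needed. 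Your SAGBI or bounded-degree check would work in principle, but the $G_p$ observation is what makes the computation short and what makes the images of the twelve generators so explicit (each is a monomial or binomial in $s_2,s_6,s_{10}$ times a power of $w$).
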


The proof of the theorem will occupy Section \ref{sec: H4}.

\begin{remark}
In other settings it is easy to make analogous constructions of rings $R$ where the ring is at least conjectured to not be finitely generated.  For example, if we use the full polynomial ring $S=\C[x,y,z]$ and let $I$ be the ideal of $10$ very general points in $\P^2$, we can form the ring $$R = \bigoplus_{m\geq 0} \frac{S \cap I^{(m)}}{S\cap I^{(m+1)}}.$$ (Note we have used symbolic powers here, although in the previous setting there was no difference between symbolic and ordinary powers.)  The Nagata conjecture would imply that $$\inf \left\{\frac{d}{m} : R_{d,m}\neq 0\right\} = \sqrt{10},$$ which is inconsistent with $R$ being finitely generated.  A similar example would be to consider a single point on a quintic surface in $\P^3$, where the conjectured infimum is similarly irrational.  Thus, the finite generation in Theorem \ref{thm: intro-H4} is quite surprising, and very particular to the $\cH_4$ configuration of points.
\end{remark}

\subsection*{Geproci sets of points}  Our particular interest in the sets considered in this paper was sparked by recent developments in the study of \emph{geproci} sets of points; see \cite{POLITUS1} for introduction to this circle of ideas.
\begin{definition}[Geproci set of points]\label{def: geproci}
A finite set of points $Z\subset\P^3$ is an {\it $(a,b)$-geproci set} if its projection from a general point in $\P^3$ to a plane $H\subset \P^3$ is the transverse intersection of curves in $H$ of degrees $a$ and $b$. 
\end{definition}
The sets $\cD_4\subset \cB_4\subset \cF_4$ and $\cH_4$ are all both geproci and projectivizations of root systems.  In addition they have the following curious properties:
\begin{itemize}
    \item[$\cD_4$] is a $(3,4)$-geproci set and it has the least number of points among all geproci sets in $\P^3$;
    \item[$\cB_4$] is a $(4,4)$-geproci set, even though it contains $\cD_4$ as a proper subset, they have both the same Waldschmidt constant;
    \item[$\cF_4$] is a $(4,6)$-geproci set, whose points split into two orbits with respect to the associated reflection group;
    \item[$\cH_4$] is a $(6,10)$-geproci set, where none of the curves in Definition \ref{def: geproci} can be taken as a union of lines.
\end{itemize}

\section{The $D_4$ configuration}\label{sec: D4}
The root system $D_4$ consists of $24$ points in $S^3\subset \R^4$, which can be thought of as the vertices of a regular $24$-cell. Their coordinates can be taken as permutations of
$$(\pm 1,\pm 1,0,0).$$
Projectivizing we obtain the set $\cD_4$ of $12$ points in $\P^3$ with coordinates
$$(\pm 1:1:0:0)$$
and permutations thereof. 

In each of the coordinate planes there are six points from $\cD_4$ and they form there a \emph{star configuration}, i.e., they are the intersection points of pairs of $4$ lines. By \cite[Proposition 2.9]{GHM2013} the Waldschmidt constant of their ideal considered as an ideal in the ring of the remaining $3$ variables equals $2$. In particular, any surface $S$ in $\P^3$ either contains all four coordinate planes or its restriction to any of them is a curve whose degree is at least twice the least multiplicity of $S$ in the points of $\cD_4$. It follows that
$$\alphahat(\cD_4)\geq 2.$$
On the other hand, since the points in $\cD_4$ are equidistributed on coordinate planes, each of them is contained in exactly $2$ of the planes. Taking the union of these planes, we obtain a divisor of degree $4$ vanishing to order $2$ in all points of $\cD_4$, hence
$$\alphahat(\cD_4)\leq 2$$
and we are done with the proof of Theorem \ref{thm: main} in this case.

\section{The $B_4$ configuration}\label{sec: B4}
The root system $B_4$ is the extension of $D_4$ by the $8$ points whose coordinates are permutations of 
$$(\pm 1,0,0,0).$$
The projectivized set $\cB_4$ consists of $16$ points, which are the union of $\cD_4$ and the four fundamental points (where all but one coordinate are $0$). Thus
$$\alphahat(\cB_4)\geq \alphahat(\cD_4)=2.$$
Taking again the union of the four coordinate planes we obtain a divisor of degree $4$ vanishing in all points of $\cD_4$ to order $2$ and in the fundamental points to order $3$. Hence
$$\alphahat(\cB_4)\leq 2$$
and we are done with the proof of Theorem \ref{thm: main} in this case.

\section{The $F_4$ configuration}\label{sec: F4}
As a root system $F_4$ consists of $48$ vectors in $\R^4$, which split into two sets of vertices of two $24$-cells. One is $D_4$ as described in section \ref{sec: D4} and the other one is defined by the eight roots resulting from permuting coordinates of $(\pm 1,0,0,0)$ and $16$ roots of the form $(\pm 1/2,\pm 1/2,\pm 1/2,\pm 1/2)$. This second set is projectively equivalent to $D_4$.

For the purpose of this work we look at the points projectively, so that there are $24$ of them. We enumerate them as follows:
\renewcommand{\arraystretch}{1.15}
$$\begin{array}{llll}
P_{1} = (1:1:0:0), & P_{2} = (1:-1:0:0), &
P_{3} = (1:0:1:0), & P_{4} = (1:0:-1:0),\\
P_{5} = (1:0:0:1), & P_{6} = (1:0:0:-1),&
P_{7} = (0:1:1:0), & P_{8} = (0:1:-1:0),\\
P_{9} = (0:1:0:1), & P_{10} = (0:1:0:-1),&
P_{11} = (0:0:1:1), & P_{12} = (0:0:1:-1),\\
P_{13} = (1:0:0:0), & P_{14} = (0:1:0:0),&
P_{15} = (0:0:1:0), & P_{16} = (0:0:0:1),\\
P_{17} = (1:1:1:1), & P_{18} = (1:1:-1:-1),&
P_{19} = (1:-1:1:-1), & P_{20} = (1:-1:-1:1),\\
P_{21} = (-1:1:1:1), & P_{22} = (1:-1:1:1),&
P_{23} = (1:1:-1:1), & P_{24} = (1:1:1:-1).
\end{array}$$
The Weyl group $G:=W(F_4) \subset \PGL(\R^4)$ is the subgroup generated by reflections orthogonal to the roots projecting to the points $P_i$.  It is the finite complex reflection group $G_{28}$ of order 1,152 in the Shephard-Todd classification.  It can be generated by the following matrices:
$$\begin{pmatrix}1 & 0 & 0 & 0\\ 0 & 0 & 1 & 0 \\ 0 & 1 & 0 & 0 \\ 0 & 0 & 0 & 1\end{pmatrix}, \quad 
\begin{pmatrix}1 & 0 & 0 & 0\\ 0 & 1 & 0 & 0 \\ 0 & 0 & 0 & 1 \\ 0 & 0 & 1 & 0\end{pmatrix}, \quad 
\begin{pmatrix} 1 & 0 & 0 & 0\\ 0 & 1 & 0 & 0\\ 0 & 0 & 1 & 0\\ 0 & 0 & 0 & -1 \end{pmatrix},
\quad \frac{1}{2} \begin{pmatrix} 1 & 1 & 1 & 1\\ 1 & 1 & -1 & -1\\ 1 & -1 & 1 & -1 \\ 1 & -1 & -1 & 1\end{pmatrix},$$
which correspond to the root basis consisting of $P_8, P_{12}, P_{16}$ and $P_{21}$.  The 24 points of the configuration split into two orbits of size 12 under the action of $G$.  The points $P_1,\dots, P_{12}$ inherited from $D_4$ lie in one orbit while the points $P_{13},\dots,P_{24}$ lie in a second orbit, which is projectively equivalent to the $D_4$ configuration as well.

For a point $Q=(a:b:c:d)\in\P^3$ we define the dual plane $H_Q$ by the equation $ax+by+cz+dw=0$.
It is easy to check that the $24$ planes dual to the above $24$ points form an arrangement of planes such that exactly $9$ of them contain any given point $P_i$. This means that their union $\H$ is a divisor of degree $24$ sitting in the $9$th symbolic power of the ideal $I(F_4)$ of the configuration of points. It follows that
$$\alphahat(\cF_4)\leq\frac{24}{9}=\frac{8}{3}.$$
The rest of this section is devoted to the proof of the reverse inequality.

Our approach in this part is geometric. We will show that any divisor $S$ of degree $d$ vanishing to order $m$ at $\cF_4$ such that $d/m<8/3$ must contain $\H$. Then the residual divisor $S'=S-\H$ has degree $d-24$ and vanishes to order $m-9$ at $\cF_4$. But we have again
$$\frac{d-24}{m-9}<\frac{8}{3},$$
so that $S'$ again contains $\H$ and the process continues for ever, which is of course not possible.

In the sequel collinearities of points in the configuration are essential. In particular, we have the following, easy to check, fact.
\begin{lemma}\label{lem: three points lines}
There are $32$ lines containing exactly $3$ points of $\cF_4$. The aligned triples are the following:    
$$\{1,3,8\},
\{1,4,7\},
\{1,5,10\},
\{1,6,9\},
\{2,3,7\},
\{2,4,8\},
\{2,5,9\},
\{2,6,10\},
\{3,5,12\},
\{3,6,11\},
\{4,5,11\},$$$$
\{4,6,12\},
\{7,9,12\},
\{7,10,11\},
\{8,9,11\},
\{8,10,12\},
\{13,17,21\},
\{13,18,22\},
\{13,19,23\},
\{13,20,24\},$$$$
\{14,17,22\},
\{14,18,21\},
\{14,19,24\},
\{14,20,23\},
\{15,17,23\},
\{15,18,24\},
\{15,19,21\},
\{15,20,22\},$$$$
\{16,17,24\},
\{16,18,23\},
\{16,19,22\},
\{16,20,21\}.
$$
\end{lemma}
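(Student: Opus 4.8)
The plan is to treat this as a finite incidence computation, organized around the symmetry group so that only a few genuinely distinct cases arise. Three of the $24$ listed points, viewed as vectors in $\C^4$, are collinear in $\P^3$ exactly when the $3\times 4$ matrix they span has rank at most $2$, and a line meets $\cF_4$ in \emph{exactly} three points iff it meets it in at least three but not four. Since the Weyl group $G=W(F_4)$ of order $1152$ preserves $\cF_4$ and each of the two orbits $O_1=\{P_1,\dots,P_{12}\}=\cD_4$ and $O_2=\{P_{13},\dots,P_{24}\}$, it permutes the set of lines through three or more configuration points; so instead of running over all $\binom{24}{3}=2024$ triples it is enough to classify, up to $G$, the lines through at least two points of $\cF_4$ and to record how many configuration points each carries. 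This breaks into a short list of cases according to whether the two chosen points lie both in $O_1$, both in $O_2$, or one in each, and in the mixed case according to whether the $O_2$-point is one of the ``coordinate points'' $P_{13},\dots,P_{16}$ or one of the ``sign points'' $P_{17},\dots,P_{24}$.

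First I would recover the $32$ lines that do occur. By the discussion in Section \ref{sec: D4}, the six points of $O_1=\cD_4$ lying in any of the four coordinate hyperplanes $\{x_i=0\}$ form a star configuration of four lines, no three concurrent (explicitly, the lines $x_j\pm x_k\pm x_\ell=0$ with $\{j,k,\ell\}=\{1,2,3,4\}\setminus\{i\}$); thus each such line contains exactly three of the six points, and each coordinate hyperplane contributes four collinear triples and no four-point line inside $O_1$. Moreover a line \emph{not} contained in a coordinate hyperplane meets each of the four hyperplanes in at most one point, while every point of $O_1$ lies on exactly two of them, so an incidence count shows such a line carries at most two points of $O_1$. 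Hence $O_1$ carries exactly $4\cdot 4=16$ collinear triples and no line with four or more points of $O_1$; these are the first sixteen triples listed. Because $O_2$ is projectively equivalent to $\cD_4$, the identical analysis gives the other sixteen triples and, again, no four-point line inside $O_2$.

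The remaining and, I expect, only substantive step is completeness: that every line through \emph{exactly} three configuration points lies entirely in $O_1$ or entirely in $O_2$, equivalently that a line meeting both orbits and carrying three or more points actually carries four. Here the arithmetic of coordinates does the work: points of $O_1$ have exactly two nonzero coordinates, the points $P_{13},\dots,P_{16}$ have exactly one, the sign points $P_{17},\dots,P_{24}$ have four, and \emph{no} configuration point has exactly three nonzero coordinates. Given $P\in O_1$ with support $\{i,j\}$ and $Q\in O_2$, one parametrizes the pencil $\{aP+bQ\}$: in most sub-cases every point other than $P$ and $Q$ turns out to have three nonzero coordinates, so the line meets $\cF_4$ only in $\{P,Q\}$; the exceptions are that when $Q=e_m$ with $m\in\{i,j\}$ the line is one of the six ``coordinate lines'' $\{x_c=x_d=0\}$, which meets $\cF_4$ in the four points $e_c\pm e_d,e_c,e_d$, and when $Q$ is a sign point whose entries at positions $i,j$ are ``aligned'' with those of $P$ the line is one of twelve lines $\overline{PP'}$ joining a pair $P,P'\in O_1$ with complementary supports, which meets $\cF_4$ in the four points $P,P',P+P',P-P'$. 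Since these eighteen four-point lines visibly have no fifth configuration point, the classification is complete: there are $16+16$ three-point lines, contained in $O_1$ or in $O_2$, together with $6+12$ four-point lines. The genuine difficulty is merely making the case analysis airtight; with the $G$-symmetry it is short, and it can in any case be confirmed instantly by a computer algebra check over the $2024$ triples.
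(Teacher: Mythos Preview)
Your argument is correct. In the paper this lemma is stated as an ``easy to check'' fact and no proof is given; the intended verification is simply the brute-force rank check over all $\binom{24}{3}$ triples that you mention as a fallback. What you supply instead is a short structural proof: you use the $G$-action and the orbit decomposition $O_1\cup O_2$ to reduce to (i) lines inside $O_1$ (handled via the star configurations in the coordinate hyperplanes, together with the neat incidence count showing a line avoiding all coordinate hyperplanes meets $O_1$ in at most two points), (ii) lines inside $O_2$ (by projective equivalence with $\cD_4$), and (iii) mixed lines, where the ``no configuration point has exactly three nonzero coordinates'' observation forces any mixed line with three points to pick up a fourth, yielding precisely the six coordinate axes and the twelve complementary-support lines as the four-point lines. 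This is more informative than the paper's bare assertion: it explains \emph{why} the $32$ three-point lines fall into the two orbits and simultaneously classifies the eighteen four-point lines, information the paper never states. The only care point is that in step~(i) one should note that a line contained in a coordinate hyperplane but not equal to one of the four star lines meets the six $O_1$-points of that hyperplane in at most two of them (the three non-star pairs lie on coordinate axes), which you implicitly use; otherwise the argument is complete.
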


Now, we focus on the plane $H=H_{16}=\left\{w=0\right\}$. This plane contains $9$ points from $\cF_4$ with numbers $\left\{1,2,3,4,7,8,13,14,15\right\}$. The alignments between these points are indicated in Figure \ref{fig: w=0 plane}. In particular the points with indices in the set $X=\left\{1,2,3,4,7,8\right\}$, coming from the first orbit, are intersection points of $4$ lines, so they form a star configuration. We denote the union of these $4$ lines by $\sigma$, and draw these lines as solid lines in Figure \ref{fig: w=0 plane}. The remaining points with indices in the set $Y=\left\{13,14,15\right\}$, coming from the other orbit, are vertices of a triangle, whose sides (the dashed lines) each contain two points from the aforementioned set. We denote the union of these three lines by $\tau$ and draw these lines as dashed lines in Figure \ref{fig: w=0 plane}. Thus the points in $X$ each lie on one line from $\tau$, and the points in $Y$ are the intersection points of pairs of lines in $\tau$.

\definecolor{uuuuuu}{rgb}{0.26666666666666666,0.26666666666666666,0.26666666666666666}
\definecolor{xdxdff}{rgb}{0.26666666666666666,0.26666666666666666,0.26666666666666666}
\definecolor{ududff}{rgb}{0.26666666666666666,0.26666666666666666,0.26666666666666666}
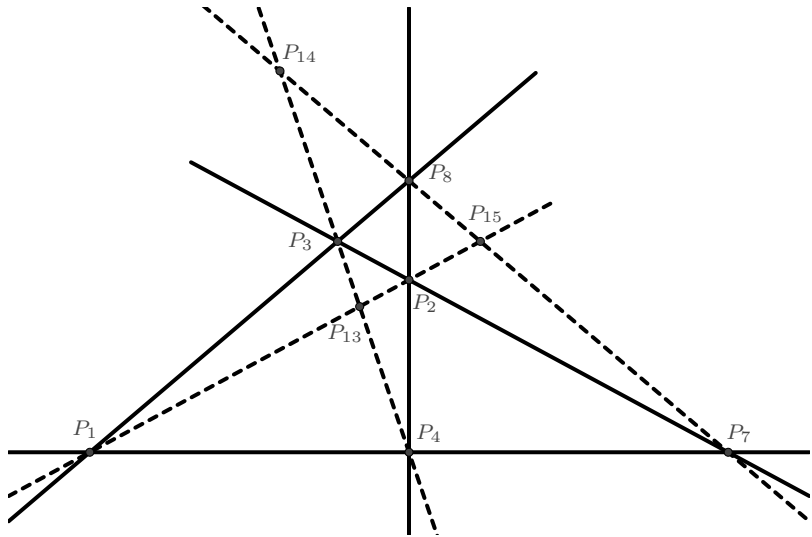
\begin{figure}[h!]
    \centering
\begin{tikzpicture}[line cap=round,line join=round,>=triangle 45,x=1cm,y=1cm,scale=0.6]
\clip(-8.78,-5.82) rectangle (8.78,5.82);
\draw [line width=1.5pt,domain=-8.78:8.78] plot(\x,{(-28-0*\x)/7}); 
\draw [line width=1.5pt,domain=-8.78:2.78] plot(\x,{(--14--6*\x)/7}); 
\draw [dashed,line width=1.5pt,domain=-8.78:8.78] plot(\x,{(--6.258823529411764--4.658823529411765*\x)/-1.564705882352941}); 
\draw [line width=1.5pt,domain=-4.78:8.78] plot(\x,{(-1.6470588235294086-4.658823529411765*\x)/8.564705882352941}); 
\draw [line width=1.5pt] (0,-5.82) -- (0,5.82); 
\draw [dashed,line width=1.5pt,domain=-8.78:3.1] plot(\x,{(-1.3461538461538436--3.807692307692308*\x)/7}); 
\draw [dashed,line width=1.5pt,domain=-8.78:8.78] plot(\x,{(-14--6*\x)/-7}); 
\begin{scriptsize}
\draw [fill=ududff] (-7,-4) circle (2.5pt);
\draw[color=ududff] (-7.1,-3.49) node {$P_1$};
\draw [fill=xdxdff] (0,-4) circle (2.5pt);
\draw[color=xdxdff] (0.44,-3.57) node {$P_4$};
\draw [fill=xdxdff] (0,2) circle (2.5pt);
\draw[color=xdxdff] (0.7,2.17) node {$P_8$};
\draw [fill=xdxdff] (-1.564705882352941,0.658823529411765) circle (2.5pt);
\draw[color=xdxdff] (-2.38,0.66) node {$P_3$};
\draw [fill=xdxdff] (7,-4) circle (2.5pt);
\draw[color=xdxdff] (7.24,-3.57) node {$P_7$};
\draw [fill=uuuuuu] (0,-0.19230769230769193) circle (2.5pt);
\draw[color=uuuuuu] (0.35,-0.7) node {$P_2$};
\draw [fill=uuuuuu] (-2.829787234042552,4.425531914893616) circle (2.5pt);
\draw[color=uuuuuu] (-2.4,4.81) node {$P_{14}$};
\draw [fill=uuuuuu] (1.5647058823529407,0.658823529411765) circle (2.5pt);
\draw[color=uuuuuu] (1.68,1.27) node {$P_{15}$};
\draw [fill=uuuuuu] (-1.0813008130081299,-0.7804878048780484) circle (2.5pt);
\draw[color=uuuuuu] (-1.4,-1.37) node {$P_{13}$};
\end{scriptsize}
\end{tikzpicture}    
    \caption{$\cF_4\cap H_{16}$.  The curve $\sigma$ is the union of the four solid lines, and $\tau$ is the union of the three dashed lines.}
    \label{fig: w=0 plane}
\end{figure}

The last 4 lines from Lemma \ref{lem: three points lines} intersect the plane $H$ in the following 4 points:
$$Z_1=(1:1:1:0),\;
Z_2=(-1:-1:1:0),\;
Z_3=(1:-1:1:0),\;
Z_4=(-1:1:1:0).$$
We denote the union of these points by $Z$.
\begin{figure}[h!]
    \centering
\begin{tikzpicture}[line cap=round,line join=round,>=triangle 45,x=1cm,y=1cm,scale=0.8]
\clip(-6,-8) rectangle (10,10);
\draw [line width=1.5pt] ([shift=(-70:8)]0,0) arc (-70:110:8);
\draw [line width=1.5pt] (-4,0) -- (6,0); 
\draw [dotted,line width=1.5pt] (-4,3) -- (5,3);
\draw [dotted,line width=1.5pt] (-4,-3) -- (5,-3);
\draw [line width=1.5pt] (0,-4) -- (0,6); 
\draw [dotted,line width=1.5pt] (3,-4) -- (3,5); 
\draw [dotted,line width=1.5pt] (-3,-4) -- (-3,5); 
\draw [dotted,line width=1.5pt] (-4,-4) -- (5,5);
\draw [dotted,line width=1.5pt] (-4,4) -- (5,-5);
\draw [dashed,line width=1.5pt] (-4,-1) -- (2,5);
\draw [dashed,line width=1.5pt] (-1,4) -- (5,-2);
\draw [dashed,line width=1.5pt] (-1,-4) -- (5,2);
\draw [dashed,line width=1.5pt] (-4,1) -- (2,-5);
\begin{scriptsize}
\draw (0,0) circle (2.5pt);
\draw (3,0) circle (2.5pt);
\draw (-3,0) circle (2.5pt);
\draw (0,3) circle (2.5pt);
\draw (3,3) circle (2.5pt);
\draw (-3,3) circle (2.5pt);
\draw (0,-3) circle (2.5pt);
\draw (3,-3) circle (2.5pt);
\draw (-3,-3) circle (2.5pt);
\draw (45:8) circle (2.5pt);
\draw (-45:8) circle (2.5pt);
\draw (90:8) circle (2.5pt);
\draw (0:8) circle (2.5pt);
\draw (-0.35,0.8) node {$P_{15}$};
\draw (3-0.35,0.8) node {$P_{3}$};
\draw (-3-0.35,0.8) node {$P_{4}$};
\draw (-0.35,3.8) node {$P_{7}$};
\draw (3-0.35,3.6) node {$Z_1$};
\draw (-3+0.35,3.6) node {$Z_4$};
\draw (-0.35,0.8-3) node {$P_{8}$};
\draw (-3-0.35,0.6-3) node {$Z_2$};
\draw (3+0.35,0.6-3) node {$Z_3$};
\draw (90:7) node {$P_{14}$};
\draw (0:7) node {$P_{13}$};
\draw (45:7.5) node {$P_{1}$};
\draw (-45:7.5) node {$P_{2}$};
\end{scriptsize}
\end{tikzpicture}    
    \caption{$X\cup Y\cup Z$ with curved line $z=0$ at infinity}
    \label{fig: w=0 plane2}
\end{figure}
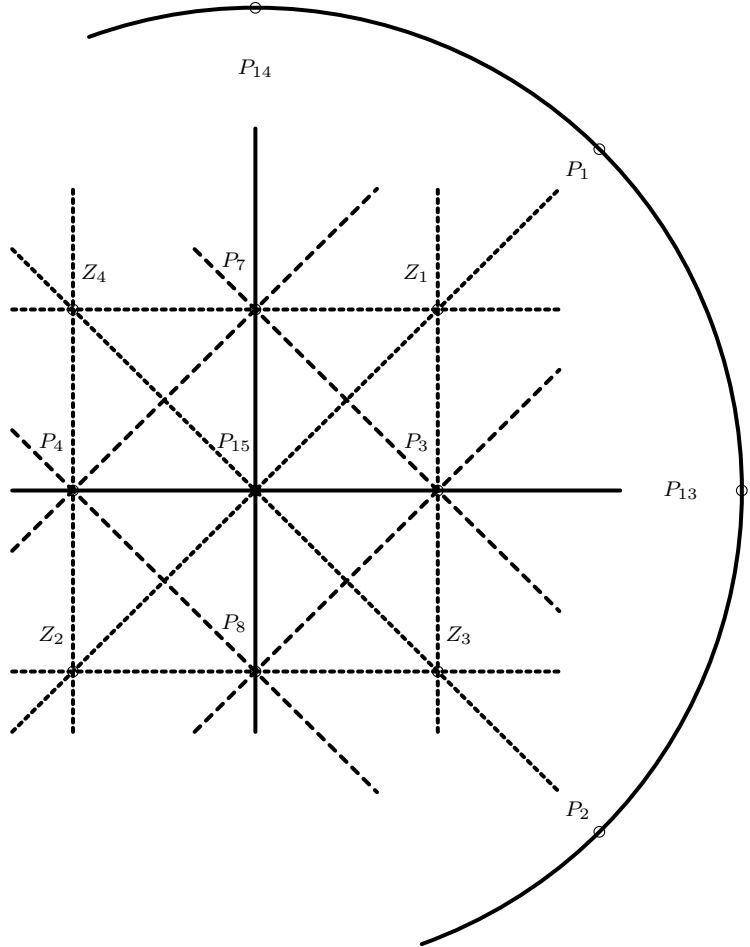
It is convenient to visualize all three sets of points $X$, $Y$, and $Z$, taking in $H$ the line $z=0$ as the line at infinity, see Figure \ref{fig: w=0 plane2}. 
The six lines joining pairs of points in $Z$, indicated in Figure \ref{fig: w=0 plane2} as dotted lines, form a complete quadrangle which we denote by $\varphi$.

\begin{notation}
For a curve $C\subset H$ we write $\nu(C)=(a;b,c,d)$, if 
$\deg(C)= a$ and $\mult_{P_i}C= b$ for $i\in X$, 
$\mult_{P_i}C= c$ for $i\in Y$, 
$\mult_{Z_i}C= d$ for $i=1,\ldots,4$; 
and we write $\nu(C)\cong(a;b,c,d)$, if 
$\deg(C)\leq a$ and $\mult_{P_i}C\geq b$ for $i\in X$, 
$\mult_{P_i}C\geq c$ for $i\in Y$, 
$\mult_{Z_i}C\geq d$ for $i=1,\ldots,4$.     
\end{notation}
In particular we have
\begin{equation}\label{eq: nu vectors}
\nu(\sigma)=(4;2,0,0),\; \nu(\tau)=(3;1,2,0),\; \nu(\varphi)=(6;1,2,3).    
\end{equation}

Since the Waldschmidt constant is a limit, it is enough to study a subsequence of values of $m$. For our approach it is convenient to assume that $m=18p$ for some odd number $p$. 

Let us now assume that $S$ is a surface in $\P^3$ of degree $d$, which vanishes to order at least $18p$ at each point of $\cF_4$ and such that
\begin{equation}\label{eq: lower WC}
    \frac{d}{18p}<\frac{8}{3}.
\end{equation}
This implies $d\leq 48p-1$.

We want to show that $S$ must contain $H$ as a component. Then $S$ would clearly contain all 12 planes in the one orbit of the $G$ action. Since the combinatorics are the same in both orbits, it follows that $S$ must also contain any plane in the $\H$ arrangement. Let us assume to the contrary that this is not the case, so that the intersection of $S$ and $H$ is some curve $\Gamma$ of degree $d$ and we have 
\begin{equation}\label{eq: nu of Gamma}
\nu(\Gamma)\cong(48p-1;18p,18p,3p+1),    
\end{equation}
with the last entry justified by the following Lemma.
\begin{lemma}\label{lem: vanishing on 3 point lines}
    The surface $S$ vanishes along any component of $\sigma$ to order at least $3p+1$.
\end{lemma}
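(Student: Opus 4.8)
The plan is to reduce the statement to an elementary degree count on a general plane through the line in question. First I would identify the components of $\sigma$: by the coordinates of the points indexed by $X=\{1,2,3,4,7,8\}$ together with Lemma~\ref{lem: three points lines}, the four lines constituting $\sigma$ are precisely the four aligned triples of $\cF_4$ supported on $X$, namely $\langle P_1,P_3,P_8\rangle$, $\langle P_1,P_4,P_7\rangle$, $\langle P_2,P_3,P_7\rangle$, $\langle P_2,P_4,P_8\rangle$, and moreover each of these lines contains exactly three points of $\cF_4$ (the listed triple) and no other point of the configuration. Thus it suffices to fix one such line $\ell$, with $\ell\cap\cF_4=\{P,Q,R\}$, and to prove that $k:=\operatorname{ord}_\ell S\ge 3p+1$, where $\operatorname{ord}_\ell S$ is the order of vanishing of $S$ along $\ell$.

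The core step is to pass to a general plane $\Pi\supset\ell$. Two standard observations make this work: (i) the points of $\cF_4$ not on $\ell$ impose only finitely many conditions on the pencil of planes through $\ell$, so a general $\Pi$ meets $\cF_4$ only in $\{P,Q,R\}$ and is not contained in $S$; and (ii) a general plane through a line computes the order of vanishing of $S$ along that line, i.e. the multiplicity of $\ell$ as a component of the plane curve $S|_\Pi\subset\Pi\cong\P^2$ equals $k$ (in local coordinates $(x,y,z)$ with $\ell=\{x=y=0\}$, expand $f$ along $\ell$ and observe that the degree-$k$ part of its restriction to a general plane $y=tx$ is a nonzero polynomial in $t$). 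Write then $S|_\Pi=k\ell+C$ with $\ell\not\subseteq C$, so $\deg C=d-k$. For each $P'\in\{P,Q,R\}$ we have $\mult_{P'}(S|_\Pi)\ge\mult_{P'}S\ge 18p$, while $\mult_{P'}(S|_\Pi)=k+\mult_{P'}C$, so $\mult_{P'}C\ge 18p-k$. Intersecting $C$ with $\ell$ (not a component of $C$) and using B\'ezout gives $d-k=\deg C\ge 3(18p-k)$, that is $2k\ge 54p-d$. Since $d\le 48p-1$, this yields $2k\ge 6p+1$, and as $k$ is an integer we conclude $k\ge 3p+1$, proving the lemma.

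I expect the only delicate point to be the verification of (ii): that a general plane through $\ell$ really sees the full order of vanishing of $S$ along $\ell$, and that one may simultaneously arrange for $\Pi$ to avoid all other points of $\cF_4$ and to not be a component of $S$; the rest is a routine application of B\'ezout's theorem and the hypothesis $d/(18p)<8/3$. I would also remark that the very same argument, applied instead to the four aligned triples through $P_{16}$ — the lines $\langle P_{16},P_{17},P_{24}\rangle$ and its three companions from Lemma~\ref{lem: three points lines} — shows that $S$ vanishes to order at least $3p+1$ along each of those lines too; since each such line meets $H$ transversally at one of the points $Z_i$, this is what forces the bound $\mult_{Z_i}\Gamma\ge 3p+1$ recorded in \eqref{eq: nu of Gamma}.
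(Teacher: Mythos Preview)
Your proof is correct and follows essentially the same approach as the paper's: pass to a general plane through the line, use B\'ezout on the trace curve with the three collinear points of multiplicity $\ge 18p$, and obtain the integer bound $k\ge 3p+1$ from $d\le 48p-1$. You are simply more explicit than the paper about the genericity issues (your points (i) and (ii)) and about the consequence for the points $Z_i$, which the paper states immediately after the lemma.
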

\proof
To fix the notation, let $\ell$ be the line passing through $P_1, P_3$ and $P_8$. Let $\widetilde{H}$ be a general plane containing $\ell$ and let $\widetilde{\Gamma}$ be the trace of $S$ in $\widetilde{H}$. We are looking for the least integer $a$ such that $\ell$ is not forced to be a component of $\widetilde{\Gamma}$  B\'ezout's theorem, i.e., such $a$ that the inequality
$$48p-1-a\geq 3(18p -a)$$
holds. It follows $a\geq 3p+1$, as $a$ is an integer. Since $\widetilde{H}$ is general, the result follows.
\endproof
The same argument works for any of the $32$ lines containing $3$ points from the $\cF_4$ configuration. The traces in $H$ of these lines (other than the configuration points) are exactly the points in $Z$. Thus $\Gamma$ must vanish there to order at least $3p+1$.

Now we apply Lemma \ref{lem: vanishing on 3 point lines} and remove the copies of $\sigma$ which are forced to appear. We obtain a new divisor $\Gamma'=\Gamma-(3p+1)\sigma$ with
\begin{equation}\label{eq: nu of Gamma'}
\nu(\Gamma')\cong(36p-5;12p-2,18p,3p+1).
\end{equation}
We turn now our attention to the components of $\tau$.
\begin{lemma}\label{lem: vanishing on 4 point lines}
    The curve $\Gamma'$ contains each component of $\tau$ with multiplicity at least $8p+1$.
\end{lemma}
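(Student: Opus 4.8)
\emph{The approach.} The plan is to run a B\'ezout argument in the spirit of Lemma \ref{lem: vanishing on 3 point lines}, but now entirely inside the plane $H$, with the residual curve $\Gamma'$ in place of $S$, and iterated so as to control not just the presence of each line but its multiplicity; I would apply it in turn to each of the three lines comprising $\tau$. The geometric input, read off from the discussion around \eqref{eq: nu vectors}, is that $\tau$ is the union of the three sides of the triangle with vertices $P_{13},P_{14},P_{15}\in Y$. Hence each component $\ell$ of $\tau$ contains exactly two of the points of $Y$, where $\Gamma'$ has multiplicity at least $18p$ by \eqref{eq: nu of Gamma'}, and exactly two of the six points of $X$, where $\Gamma'$ has multiplicity at least $12p-2$; moreover $\ell$ contains none of $Z_1,\dots,Z_4$, since $\nu(\tau)=(3;1,2,0)$.

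\emph{The count.} Fix such a component $\ell$ and let $k\ge 0$ be the multiplicity with which $\ell$ occurs in $\Gamma'$, so that the residual curve $\Gamma'':=\Gamma'-k\ell$ does not contain $\ell$ and the intersection number $\Gamma''\cdot\ell$ is finite. If $k\ge 12p-1$ we are done, since $12p-1\ge 8p+1$ for $p\ge 1$; so assume $k\le 12p-2$. Removing $k$ copies of the line $\ell$ lowers the multiplicity of $\Gamma'$ at each point of $\ell$ by at most $k$, and with $k\le 12p-2$ the four resulting multiplicity bounds remain nonnegative, so B\'ezout's theorem in $H$ gives
$$\deg\Gamma''\ \ge\ \Gamma''\cdot\ell\ \ge\ 2\bigl((12p-2)-k\bigr)+2\bigl(18p-k\bigr)\ =\ 60p-4-4k .$$
Since also $\deg\Gamma''=\deg\Gamma'-k\le 36p-5-k$, we obtain $36p-5-k\ge 60p-4-4k$, that is $3k\ge 24p+1$, whence $k\ge 8p+1$ because $k$ is an integer. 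As this holds for each of the three components of $\tau$, the lemma follows.

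\emph{Where the difficulty lies.} I do not expect a genuine obstacle in this step; it is essentially bookkeeping. The only things needing care are the elementary inequality $\mult_P(\Gamma'-k\ell)\ge \mult_P\Gamma'-k$ for $P\in\ell$ and the fact that $\Gamma''\cdot\ell\le\deg\Gamma''$ once $\ell$ is no longer a component of $\Gamma''$; both are standard. The substance of the statement lives upstream, in the estimate \eqref{eq: nu of Gamma'} for $\Gamma'$ and in the combinatorial fact that each line of $\tau$ carries \emph{two} of the heavy points of $Y$: it is precisely the contribution $2\cdot 18p$ from those two points that forces the threshold to be as large as $8p+1$.
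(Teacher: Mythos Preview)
Your argument is correct and is essentially identical to the paper's proof: both run the same B\'ezout count on a component $\ell$ of $\tau$ using the two $Y$-points of multiplicity $18p$ and the two $X$-points of multiplicity $12p-2$, arriving at the inequality $36p-5-k\ge 60p-4-4k$ and hence $k\ge 8p+1$. The only cosmetic difference is that the paper phrases it as finding the least $a$ for which B\'ezout no longer forces $\ell$ into the curve, while you frame it as removing $k$ copies and bounding the residual; the arithmetic is the same.
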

\proof
The argument is similar to that of Lemma \ref{lem: vanishing on 3 point lines}. We are now looking for the least $a$ such that any component of $\tau$ is not forced to be a component of $\Gamma'$ by B\'ezout's theorem. So, we must have
$$
36p-5-a\geq 2(18p-a)+2(12p-2-a).
$$
 Keeping in mind that $a$ is an integer, this gives $a\geq 8p+1$.
\endproof
Thus we can reduce the divisor $\Gamma'$ further subtracting $(8p+1)\tau$. This gives a new divisor $\Gamma''=\Gamma'-(8p+1)\tau$ with
\begin{equation}\label{eq: nu of Gamma''}
\nu(\Gamma'')\cong(12p-8;4p-3,2p-2,3p+1).
\end{equation}
Now, using B\'ezout's Theorem as before, it is easy to check that $\Gamma''$ contains the divisor $2\varphi+3\sigma$ as component. For this divisor we have
\begin{equation}\label{eq: nu of 2phi+3sigma}
\nu(2\varphi+3\sigma)=(24;8,4,6).
\end{equation}
Subtracting this divisor from $\Gamma''$ we obtain a new divisor $\Gamma'''$ with 
$$\nu(\Gamma''')\cong(12(p-2)-8;4(p-2)-3,2(p-2)-2,3(p-2)+1),$$
which has the same form as $\Gamma''$. Taking into account that $p$ was assumed to be odd, we can subtract $2\varphi+3\sigma$ altogether $\frac{p-1}{2}$ times. Thus we arrive finally to the residual divisor $\Sigma$ with
$$\nu(\Sigma)\cong(4;1,0,4).$$
A plane curve of degree $d$ can have multiple points of multiplicity $d$ if and only if all these points lie on a single line. Since this is not the case for points in $Z$, we see immediately that $\Sigma$ cannot exist.

We have showed in this way that the assumption in \eqnref{eq: lower WC} was false. Therefore 
$$\alpha(I(\cF_4)^{(18p)})\geq 48p,$$
and hence 
$$\alphahat(I(\cF_4))\geq \frac{8}{3}.$$  This completes the proof of Theorem \ref{thm: main}.

\section{The $H_4$ configuration}\label{sec: H4}

\subsection{The $H_4$ configuration} Let $\varphi$ be the golden ratio.  The $H_4$ root system $H_4 \subset \R^4$ is a highly symmetric configuration of $120$ vectors in $\R^4$; they are the vertices of a 600-cell.  If we let $\varphi$ be the golden ratio, then these vectors can be taken to be 
\begin{itemize}
\item $(\pm 1,0,0,0)$ and its permutations (8 vectors),
\item $\frac{1}{2}(\pm 1,\pm 1,\pm 1,\pm 1)$, where the signs are chosen independently (16 vectors),
\item $\frac{1}{2}(\pm \varphi, \pm 1 ,\pm \varphi^{-1},0)$ and its \emph{even} permutations, where the signs are chosen independently (96 vectors).
\end{itemize}
The Weyl group $G:=W(H_4) \subset \GL(\R^4)$ is the subgroup generated by reflections orthogonal to the vectors.  It is the finite complex reflection group $G_{30}$ of order 14,400 in the Shephard-Todd classification.  It can be generated by the following matrices:
$$\begin{pmatrix}-1 & 0 & 0 & 0\\ 0 & 1 & 0 & 0 \\ 0 & 0 & 1 & 0 \\ 0 & 0 & 0 & 1\end{pmatrix}, \quad \begin{pmatrix}1 & 0 & 0 & 0\\ 0 & -1 & 0 & 0 \\ 0 & 0 & 1 & 0 \\ 0 & 0 & 0 & 1\end{pmatrix}, \quad \frac{1}{2} \begin{pmatrix} \varphi & \varphi-1 & -1 & 0\\ \varphi-1 & 1 & \varphi & 0\\ -1 & \varphi & 1-\varphi & 0 \\ 0 & 0 & 0 & 2\end{pmatrix},  \quad \begin{pmatrix} 0 & 0 & -1 & 0\\ 0 & 0 & 0 & 1\\ 1 & 0 & 0 & 0\\ 0 & -1 & 0 & 0 \end{pmatrix}.$$

Consider the point $p=(0,0,0,1)$.  The first three of the above matrices all stabilize $p$, and in fact they generate the stabilizer $G_p$ of $p$. This is itself the rank 3 complex reflection group $G_p\cong W(H_3)\cong A_5\times \Z_2$ of symmetries of the icosahedron.  It is the group $G_{23}$ of order 120 in the Shephard-Todd classification.  Elements of the stabilizer of $p$ are block diagonal matrices of the form $$\begin{pmatrix} A & 0 \\ 0 & 1\end{pmatrix}$$ where $A$ is a matrix in the image of a particular representation $A_5\times \Z_2\to \GL(\R^3)$ realizing $A_5\times \Z_2$ as the group of symmetries of the icosahedron (compare with \cite[\S 2.2]{Calvo2024}).  The orbit of $p$ under $G$ consists precisely of the 120 points in the root system.  The groups $G$ and $G_p$ both act on the polynomial ring $S = \C[x,y,z,w]$ by linear changes of variables. 

We can also projectivize the root system $H_4$ to get a collection of $60$ points $\cH_4 \subset \P^3$.    The point $\tilde p = [0:0:0:1]$ has orbit $\cH_4$ under the induced action of $G$.  To compute the Waldschmidt constant of the ideal $I_{\cH_4}$, we need to study which pairs $(d,m)$ admit homogeneous polynomials $F \in S=\C[x,y,z,w]$ of degree $d$ which vanish to order $m$ at all of the points in $\cH_4$.  The next easy lemma reduces this problem to studying invariant polynomials.  For this reason, the rest of our arguments will all deal exclusively with invariant polynomials.

\begin{lemma}
Suppose that $F\in S = \C[x,y,z,w]$ is a homogeneous polynomial of degree $d$ which vanishes to order at least $m$ at all $60$ points in the $\cH_4$ configuration.  Then there is a $G$-invariant polynomial of some degree $dk$ which vanishes to order at least $mk$ at every point of $\cH_4$.  Therefore, the Waldschmidt constant $\widehat\alpha(I_{\cH_4})$ can be computed by only considering invariant polynomials.
\end{lemma}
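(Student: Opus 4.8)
The plan is to use an averaging argument over the group $G$. Given a homogeneous polynomial $F$ of degree $d$ vanishing to order at least $m$ at every point of $\cH_4$, I first want to produce from it an invariant polynomial with controlled degree and vanishing. The naive symmetrization $\sum_{g\in G} g\cdot F$ need not work, since the sum could be identically zero (the span of the $G$-orbit of $F$ need not contain a nonzero invariant). The standard fix is to take a \emph{product} over the orbit rather than a sum: set $\Phi = \prod_{g\in G} (g\cdot F)$. This is manifestly $G$-invariant, since $G$ permutes the factors, and it is nonzero because $S$ is a domain and each $g\cdot F\neq 0$.

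Next I would check the two numerical claims. Writing $k = \#G$ (here $k = 14400$), the polynomial $\Phi$ has degree $dk$. For the vanishing, fix a point $Q\in\cH_4$ and a factor $g\cdot F$. Since $\cH_4$ is a single $G$-orbit, $g^{-1}Q\in\cH_4$, so $F$ vanishes to order at least $m$ at $g^{-1}Q$, which means $g\cdot F$ vanishes to order at least $m$ at $Q$. Multiplicities add under multiplication of polynomials (the order of vanishing of a product at a point is the sum of the orders of vanishing of the factors — a consequence of working in the local ring at $Q$, which is a domain), so $\Phi$ vanishes to order at least $mk$ at $Q$. Since $Q$ was arbitrary, $\Phi\in S$ is a $G$-invariant homogeneous polynomial of degree $dk$ vanishing to order at least $mk$ at every point of $\cH_4$.

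Finally, to conclude the statement about the Waldschmidt constant, observe that $\Phi$ witnesses the ratio $\frac{dk}{mk} = \frac{d}{m}$. Hence for every pair $(d,m)$ realized by some polynomial, the same ratio is realized by an invariant polynomial; conversely every invariant polynomial is in particular a polynomial. Therefore the infimum defining $\widehat\alpha(I_{\cH_4})$ over all homogeneous polynomials equals the infimum taken over $G$-invariant homogeneous polynomials only, as claimed.

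The only point requiring any care — and it is genuinely routine rather than an obstacle — is the additivity of vanishing order under products, which holds because the completed local ring $\widehat{\O}_{\P^3,Q}$ is a domain and the order of vanishing is the $\mathfrak{m}_Q$-adic valuation-type function that is additive on products. Everything else is formal. I expect no serious obstacle in this lemma; the real work of the section lies in the subsequent explicit analysis of the invariant ring $R$.
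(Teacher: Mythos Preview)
Your proposal is correct and matches the paper's approach exactly: the paper's entire proof is the one-line observation that $\prod_{g\in G} gF$ works with $k=|G|$. Your write-up simply unpacks the details (invariance via permutation of factors, nonvanishing since $S$ is a domain, and additivity of vanishing orders under products) that the paper leaves implicit.
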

\begin{proof}
The polynomial $\prod_{g\in G} gF$ does the trick for $k = |G|$.
\end{proof}

\subsection{Fundamental invariants for $G$}
Since $G$ is a complex reflection group, it is known that the ring of invariants $S^G \subseteq S$ is a polynomial ring $$T:=S^G = \C[f_2,f_{12},f_{20},f_{30}].$$  Here $f_d$ is a homogeneous polynomial of degree $d$, and $f_2,f_{12},f_{20},f_{30}$ are algebraically independent.  

It takes some work to explicitly describe the polynomials $f_d$.  Since $G$ acts by orthogonal matrices, it preserves distance and the polynomial $$f_2 := x^2+y^2+z^2+w^2$$ is invariant.  It is the unique degree $2$ invariant up to scale.

The rest of the invariants $f_{12},f_{20},f_{30}$ are not uniquely determined.  For instance, the space $T_{12}$ of degree $12$ invariants is a pencil (spanned by $f_2^6$ and $f_{12}$), and $f_{12}$ can be taken to be any degree $12$ invariant which is not a multiple of $f_2^6$.  Since we are interested in vanishing at the point $p$, we can canonically define $f_{12}$ up to scale by demanding that $f_{12}$ vanish at $p$.  

To explicitly compute $f_{12}$ we first need to identify any invariant of degree 12 which is not a multiple of $f_2^6$. To do this, we can for instance use the Reynold's operator \begin{align*}R_G&:S\to S^G \\ R_G(f) &= \frac{1}{|G|} \sum_{g\in G} gf\end{align*} to produce an invariant polynomial $$g_{12} = c_{12}R_G(x^{12})$$ where the constant $c_{12}=64/29925$ is chosen to normalize the coefficient of $x^{12}$ to $1$.  Then $$f_{12} := c_2'(f_2^6 - g_{12})$$ vanishes to order $2$ at $p$.  The coefficient $c_2' = 19/4$ is chosen to make the coefficient of $x^2w^{10}$ be $1$.  We refer the reader to the Appendix for Macaulay2 scripts that can be used to verify the computations in this section.

\begin{remark}
The order of vanishing of any invariant $f\in T$ at $p$ is even.  For instance, the group $G$ contains a transformation which acts on the variables by $x\mapsto -x$ and $y\mapsto y$, $z\mapsto z$, $w\mapsto w$.  Therefore every occurrence of $x$ in a monomial of $f$ has an even power.  The same conclusion holds for each other variable.  Then the order of vanishing at $p$ is measured by looking at the lowest degree monomial in $f(x,y,z,1)$, which is even.  See Proposition \ref{prop-leading} for a much more powerful statement.
\end{remark}

Next we compute a polynomial $f_{20}$.  The space $T_{20}$ has dimension $3$, and it can be computed that there is a unique invariant $f_{20}$ of degree $20$ which vanishes to order $4$ at $p$.  As in the previous case, we can first find any invariant $g_{20}$ of degree $20$ which is not a linear combination of $f_2^{10}$ and $f_2^4 f_{12}$.  Because $f_2$ is an invariant, the differential operator corresponding to $f_2$ takes invariant polynomials to invariant polynomials.  This is just the Laplacian $$\Delta := \frac{\partial^2}{\partial x^2}+\frac{\partial^2}{\partial y^2}+\frac{\partial^2}{\partial w^2}+\frac{\partial^2}{\partial z^2}.$$  Therefore, $$g_{20}:= \frac{1}{120}\Delta^2(f_{12}^2)$$ is an invariant of degree 20 (normalized so that the coefficient of $x^{20}$ is $1$), and it can be checked that it is independent of $f_2^{10}$ and $f_2^4f_{12}$.  It can then be computed that $$f_{20}:=\frac{5}{968}(-3f_2^{10}-224f_2^4f_{12}+3g_{20})$$ is the unique invariant of degree $20$ that vanishes to order $4$.  The coefficient of $x^4w^{16}$ is $1$.

Similar considerations lead us to define $$g_{30} := \frac{1}{42}\Delta(f_{12}f_{20})$$ and $$f_{30} := \frac{1}{40}(-15 f_2^5f_{20} - 6  f_2^3  f_{12}^2 + 21  g_{30});$$
it vanishes at $p$ to order $6$ (this property alone does not identify $f_{30}$ canonically, although $f_{30}$ also satisfies another property that identifies it canonically; this will be explained in the next subsection).

There are two additional invariants that are especially important to introduce before proceeding.  There is a unique invariant $f_{24}$ of degree $24$ that vanishes to order $6$ at $p$.  It is given by $$f_{24} = -\frac{1}{3}(f_{12}^2-f_2^2f_{20}).$$  There is also a unique invariant $f_{36}$ of degree $36$ that vanishes to order $10$ at $p$.  It has equation $$f_{36} = \frac{5}{27}(f_{12}^3-3f_2^2f_{12}f_{20}+2f_2^3f_{30}).$$

\begin{remark}
We will see that the polynomial $f_{36}$ of degree $36$ which vanishes to order $10$ computes the Waldschmidt constant $\widehat\alpha(I_{\cH_4}) = \frac{36}{10}$.
\end{remark}

\subsection{Fundamental invariants for $G_p$}

Similarly, the stabilizer $G_p$ also acts on $S$.  The block diagonal form of the matrices in $G_p$ shows that $G_p$ stabilizes $w$ and acts on $x,y,z$  by the symmetries of the icosahedron.  Accordingly, the ring of invariants $S^{G_p} \subseteq S$ is a polynomial ring $S^{G_p} = \C[s_2,s_6,s_{10},w]$, where $s_d(x,y,z)$ is a homogeneous polynomial in $x,y,z$ of degree $d$.  The polynomials $s_2,s_6,s_{10},w$ are algebraically independent.  As with $G$, the forms $s_6$ and $s_{10}$ are not uniquely determined; we will make a choice of them below.
 
We can use $G$-invariant polynomials which vanish at $p$ to construct $G_p$-invariant polynomials.

\begin{proposition}\label{prop-leading}
Let $f\in T$ be a $G$-invariant polynomial of degree $d$ which vanishes to order $m$ at $p$.  Write $$f(x,y,z,w) =  h_m(x,y,z)w^{d-m}+h_{m+1}(x,y,z)w^{d-m-1}+\cdots$$ where $h_d$ is homogeneous of degree $d$.  Then the leading term $h_m(x,y,z)$ is $G_p$-invariant.  Hence it is a polynomial in the fundamental invariants $s_2,s_6,s_{10}$.

Equivalently, if $I = (x,y,z)$ is the homogeneous ideal of $p$, then $G_p$ acts on $I^m/I^{m+1}$ and the element $\overline{f} \in I^m/I^{m+1}$ is $G_p$-invariant.
\end{proposition}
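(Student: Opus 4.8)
The plan is to exploit the explicit block-diagonal shape of the elements of $G_p$ recorded above. Every $g\in G_p$ acts on $S=\C[x,y,z,w]$ by the linear substitution attached to a matrix of the form $\left(\begin{smallmatrix}A&0\\0&1\end{smallmatrix}\right)$ with $A\in\GL_3$; in particular $g$ fixes the variable $w$ and carries the span of $x,y,z$ into itself via $A$. This is the only structural input needed. The one point to be careful about is that the lower-right entry is genuinely $1$ (since $g$ is orthogonal and fixes the vector $p=(0,0,0,1)$), so $w$ is fixed, not merely scaled.

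First I would expand $f$ in powers of $w$, writing $f=\sum_{k\geq m}h_k(x,y,z)\,w^{d-k}$ with $h_k$ homogeneous of degree $k$ and $h_m$ the leading term; the hypothesis that $f$ vanishes to order $m$ at $p$ is exactly the statement that the lower-degree coefficients vanish. For $g\in G_p$ with linear part $A$ on $x,y,z$, the substitution fixes $w$, so $g\cdot f=\sum_{k\geq m}(g\cdot h_k)(x,y,z)\,w^{d-k}$. Since $f$ is $G$-invariant it is a fortiori $G_p$-invariant, so $g\cdot f=f$; equating the coefficients of the distinct monomials $w^{d-k}$ gives $g\cdot h_k=h_k$ for every $k$ and every $g\in G_p$. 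Taking $k=m$, the leading term $h_m$ is a $G_p$-invariant polynomial in $x,y,z$ alone. Since $S^{G_p}=\C[s_2,s_6,s_{10},w]$ and $h_m$ does not involve $w$, it lies in the subring $\C[s_2,s_6,s_{10}]$, which proves the first assertion.

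Finally I would check that the reformulation in terms of $I^m/I^{m+1}$ restates the same fact. Because $I=(x,y,z)$ and $S/I\cong\C[w]$, the associated graded ring $\bigoplus_{m\geq 0}I^m/I^{m+1}$ is canonically isomorphic to $\C[x,y,z,w]$ graded by degree in $x,y,z$, and under this isomorphism the class $\overline f\in I^m/I^{m+1}$ of an order-$m$ element $f\in I^m$ corresponds to its leading term $h_m(x,y,z)\,w^{d-m}$. As $G_p$ fixes $p$ it preserves $I$ and hence acts on each quotient $I^m/I^{m+1}$ compatibly with the substitution above; thus $\overline f$ is $G_p$-invariant precisely when $h_m$ is, which is what the first part established. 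There is no real obstacle here beyond the bookkeeping; the one genuinely load-bearing observation, as noted, is that $w$ is fixed by all of $G_p$, which is what legitimizes comparing coefficients of powers of $w$.
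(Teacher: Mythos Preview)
Your proof is correct and follows essentially the same approach as the paper: both use the block-diagonal form of $G_p$ to note that $w$ is fixed, apply $G_p$-invariance of $f$, and compare coefficients of powers of $w$ to conclude that $h_m$ is $G_p$-invariant. Your version is slightly more explicit (you observe that in fact every $h_k$ is $G_p$-invariant), but the argument is the same.
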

\begin{proof}
The group $G_p$ stabilizes $I$ and its powers $I^m$, so it acts on $I^m/I^{m+1}$.  Since $G$ stabilizes $f$, so does $G_p$, and therefore $G_p$ stabilizes $\overline f$.  From the first point of view, this means $G_p$ stabilizes $h_m(x,y,z)w^{d-m}$.  But $G_p$ also stabilizes $w$ and acts on $x,y,z$ without involving $w$, so $G_p$ also stabilizes $h_m(x,y,z)$.
\end{proof}

Now we make definitions of the fundamental invariants $s_2,s_6,s_{10}$ for $G_p$ that are compatible with our definitions of the fundamental invariants $f_2,f_{12},f_{20},f_{30}$ for $G$.  Recall that $f_{12}$, $f_{24}$, and $f_{36}$ vanish to orders $2,6$, and $10$, respectively.  We define homogeneous polynomials $s_d(x,y,z)$ by the congruences \begin{align*}s_2 &\equiv  f_{12}(x,y,z,1)  \pmod {I^3}\\
 s_6 &\equiv f_{24}(x,y,z,1) \pmod {I^7}\\
 s_{10} &\equiv f_{36}(x,y,z,1) \pmod {I^{11}}.
 \end{align*}
 It can be verified that $s_2^3,s_6$ are linearly independent and $s_2^5,s_2^2s_6,s_{10}$ are linearly independent, and therefore that $s_2,s_6,s_{10}$ can be taken to be the fundamental invariants for $G_p$.

Given an $f\in T$ of degree $d$ which vanishes to order $k$, we write $\overline f\in I^k/I^{k+1}$ for its image in $I^k/I^{k+1}$.  Observe that we have an isomorphism of the invariants $$(I^k/I^{k+1})_d^{G_p} \cong \C[s_2,s_6,s_{10}]_k \cdot w^{d-k},$$ so that $\overline f$ can be written as $w^{d-k}$ times a homogeneous form in $s_2,s_6,s_{10}$ of degree $k$.  We compute the images of our invariants of $G$:
\begin{align*}
\overline f_2 & = w^2\\
\overline f_{12} &= s_2w^{10}\\
\overline f_{20} &= s_2^2 w^{16}\\
\overline f_{24} &= s_6 w^{18}\\
\overline f_{30} &= s_2^3 w^{24}\\
\overline f_{36} &= s_{10} w^{26}.
\end{align*}
The power of $w$ is a convenient placeholder that lets us remember what degree the form came from.
\begin{remark}
The invariant $f_{30}$ is canonically determined by the properties that it vanishes to order $6$ at $p$ and that $\overline f_{30} = s_2^3 w^{24}$.  Since $f_2^3 f_{24}$ also vanishes to order $6$, adding some amount of $f_2^3f_{24}$ to $f_{30}$  would yield a degree $30$ invariant $g_{30}$ such that $\overline g_{30}$ is a linear combination of $s_6w^{24}$ and $s_2^3 w^{24}$.
\end{remark}

\subsection{The associated graded ring} A more ambitious problem than the computation of the Waldschmidt constant $\widehat \alpha(I_{\cH_4})$ is the computation of the dimension $$\dim (T_d\cap I^m)$$ of the space of degree $d$ invariant forms which vanish to multiplicity $m$ at $p$ for all possible pairs $(d,m)$.  This is what we will now do.

For fixed $d$ and large $m$, we have $T_d\cap I^m = 0$.   Therefore for any $m_0\geq 0$ we have $$\dim (T_d \cap I^{m_0}) = \sum_{m\geq m_0} \dim\left( \frac{T_d\cap I^{m}}{T_d\cap I^{m+1}}\right).$$ The dimensions on the right hand side can be viewed as the number of independent linear conditions it is for a form of degree $d$ and multiplicity $m$ to vanish to order $m+1$.  Computing the dimensions of $T_d\cap I^m$ is therefore an equivalent problem to computing the dimensions of the spaces $(T_d\cap I^m)/(T_d\cap I^{m+1})$.  We can package this information for all $d$ and $m$ into an associated graded ring.

\begin{definition}
We define a $\C$-algebra $$R = \bigoplus_{m\geq 0} \frac{T\cap I^m}{T\cap I^{m+1}}.$$ Observe that $R$ is graded both by the degree $d$ of a form and the order of vanishing $m$.  An element of $(T_d\cap I^m) / (T_d \cap I^{m+1})$ has \emph{bidegree} $(d,m)$.
\end{definition}

Our main result will show that $R$ is finitely generated and identify an explicit list of generators.  Thus, its Hilbert series and the dimensions of its graded pieces can be effetively computed.  The next result enables easy computation inside the ring $R$.

\begin{proposition}
Give the ring $\C[s_2,s_6,s_{10},w]$ a bigrading by giving $s_d$ bidegree $(d,d)$ and $w$ bidegree $(1,0)$.  Then the assignment $f \mapsto \overline f$ defines a bidegree preserving inclusion of rings $R\to \C[s_2,s_6,s_{10},w]$.
\end{proposition}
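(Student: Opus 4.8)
The plan is to recognize the assignment $f\mapsto\overline f$ as the homomorphism of associated graded rings induced by the inclusion of filtered rings $T\hookrightarrow S$. Here $S=\C[x,y,z,w]$ is filtered by the $I$-adic filtration $F^mS=I^m$, and $T=S^G$ carries the induced subspace filtration $F^mT=T\cap I^m$. Both filtrations are multiplicative ($F^aS\cdot F^bS\subseteq F^{a+b}S$, and since $T$ is a ring, $F^aT\cdot F^bT\subseteq T\cap I^{a+b}=F^{a+b}T$), so the associated graded objects $\operatorname{gr}_F S=\bigoplus_m I^m/I^{m+1}$ and $\operatorname{gr}_F T=\bigoplus_m (T\cap I^m)/(T\cap I^{m+1})=R$ are graded rings, and the inclusion induces a homomorphism of graded rings $R\to\operatorname{gr}_F S$. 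With $R$ given its natural $\C$-algebra structure this is automatically a ring map; what remains is to identify the target, check that it restricts to $\C[s_2,s_6,s_{10},w]$ and preserves the bigrading, and prove injectivity.

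For the target I would compute $\operatorname{gr}_I S$ directly. A homogeneous $g\in S_d\cap I^m$ can be written $g=h_m w^{d-m}+h_{m+1}w^{d-m-1}+\cdots+h_d$ with $h_j\in\C[x,y,z]_j$; the degree-$d$ piece of $I^m/I^{m+1}$ is spanned by the monomials $x^ay^bz^cw^{d-m}$ with $a+b+c=m$, and the class of $g$ is $h_m w^{d-m}$. Since $\C[x,y,z]$ is a domain, multiplying leading forms behaves as $(h_m w^{d-m})(k_n w^{e-n})=h_m k_n w^{d+e-m-n}$ compatibly with multiplication in $S$ modulo higher powers of $I$; hence $\operatorname{gr}_I S\cong\C[x,y,z,w]$, bigraded by giving $w$ bidegree $(1,0)$ and $x,y,z$ bidegree $(1,1)$, where the first index records polynomial degree and the second the order of vanishing at $p$. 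Under this identification the induced map $R\to\C[x,y,z,w]$ sends the class of $f\in T_d\cap I^m$ to $h_m w^{d-m}=\overline f$ and preserves bidegree by construction. By Proposition~\ref{prop-leading} the leading form $h_m$ of a $G$-invariant $f$ is $G_p$-invariant, hence a polynomial in $s_2,s_6,s_{10}$, so the image of $R$ lies in the subring $\C[s_2,s_6,s_{10},w]$; and since $s_d$ is homogeneous of degree $d$ in $x,y,z$, it has bidegree $(d,d)$, matching the prescribed bigrading on $\C[s_2,s_6,s_{10},w]$.

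Finally I would prove injectivity, which is the only step carrying any content, and even it is elementary: if $f\in T_d\cap I^m$ represents a class of bidegree $(d,m)$ in $R$ that maps to $0$, then $f\in I^{m+1}$; but $f\in T$, so $f\in T\cap I^{m+1}=F^{m+1}T$, and $f$ already represents $0$ in $R$. (This is just the general fact that $\operatorname{gr}A\to\operatorname{gr}B$ is injective whenever $A\subseteq B$ is given the induced filtration, applied degree by degree.) I do not expect a genuine obstacle; the only point to treat with care is the explicit bigraded identification $\operatorname{gr}_I S\cong\C[x,y,z,w]$ together with the verification that under it the induced map is exactly $f\mapsto\overline f$, after which the asserted bidegree-preserving ring inclusion follows immediately.
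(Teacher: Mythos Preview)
Your proposal is correct and follows essentially the same approach as the paper: both recognize $R$ as the associated graded of $T$ under the filtration induced from the $I$-adic filtration on $S$, obtain the induced map of associated graded rings into $\bigoplus_m I^m/I^{m+1}$, and then invoke Proposition~\ref{prop-leading} to see that the image lands in the $G_p$-invariants, identified with $\C[s_2,s_6,s_{10},w]$. The paper's proof is terse (three sentences), while you spell out the identification $\operatorname{gr}_I S\cong\C[x,y,z,w]$, the bigrading, and the injectivity argument explicitly---but the underlying structure is the same.
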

\begin{proof}
The assignment $f\mapsto \overline f$ gives inclusions \begin{align*}\frac{T\cap I^m}{T\cap I^{m+1}} &\to \frac{I^{m}}{I^{m+1}}
\end{align*} defining an inclusion of associated graded rings $$R\to \bigoplus_{m\geq 0} \frac{I^{m}}{I^{m+1}}.$$ The image lies in the space of $G_p$-invariants, which can be identified with $\C[s_2,s_6,s_{10},w]$. 
\end{proof}

\subsection{Generators of $R$}\label{ssec: Rgens}

We have already discussed several $G$-invariant polynomials $f$ with large orders of vanishing at $p$.  Each of their images $\overline f$ gives a generator of the ring $R$.  In total we will need 12 generators to generate $R$.  We present the list of all our generators in Table \ref{tab-1}.  In this table we list a bidegree $(d,m)$, an element $f\in T_d\cap I^m$, and its image $\overline f \in \C[s_2,s_6,s_{10},w]$.  The fact that $f\in I_m$ and the expression for $\overline f$ can be readily verified with Macaulay2.  We only record $f$ and $\overline f$ up to scale, as the particular multiple is not important.
\begin{table}$$\renewcommand\arraystretch{1.3}\begin{array}{c|l|l}
(d,m) & f \textrm{ up to scale} & \overline f \textrm{ up to scale}\\\hline
(2,0) &f_2 & w^2\\ 
(12,2) & f_{12} & s_2 w^{10}\\
(20,4) & f_{20} & s_2^2w^{16}\\
(24,6) & f_{12}^2-f_2^2f_{20} & s_6 w^{18}\\
(30,6) & f_{30} & s_2^3w^{24}\\
(32,8) & f_{12}f_{20}-f_2f_{30} & s_2s_6w^{24}\\
(36,10) &  f_{12}^3-3f_2^2f_{12}f_{20}+2f_2^3f_{30} & s_{10}w^{26}\\
(42,10) & f_2f_{20}^2-f_{12}f_{30} & s_2^2s_6w^{32}\\
(44,12) &  f_{12}^2f_{20}+f_2^2f_{20}^2-2f_2f_{12}f_{30} & (3 s_2s_{10}-5s_6^2)w^{32}\\
(54,14) &2f_2f_{12}f_{20}^2-f_{12}^2f_{30}-f_2^2f_{20}f_{30} & (6s_2^2s_{10}-5s_2s_6^2)w^{40}\\
(60,16) & f_{20}^3-f_{30}^2 & (4s_2^3s_{10}-5s_2^2s_6^2)w^{44}\\
(66,18) & 3f_2f_{12}^2f_{20}^2-f_2^3f_{20}^3-f_{12}^3f_{30} & (9s_2s_6s_{10}-10s_6^3)w^{48}\\ &\qquad  -3f_2^2f_{12}f_{20}f_{30}+2f_2^3f_{30}^2
\end{array}$$
\caption{The 12 generators $\overline f$ of $R$.}\label{tab-1}\end{table}

\begin{theorem}\label{thm-H4Main}
The associated graded ring $$R = \bigoplus_{m\geq 0} \frac{T\cap I^m}{T\cap I^{m+1}}$$ is generated by the 12 elements $\overline f$ listed in Table \ref{tab-1}.  The bigraded Hilbert series $$HS(u,v):= \sum_{d,m} \dim\left(\frac{T_d\cap I^m}{T_d\cap I^{m+1}}\right)u^dv^m$$ is $$\tfrac{(1+u^{12}v^2+u^{24}v^6+u^{30}v^6+u^{32}v^8-u^{32}v^6+u^{42}v^{10}-u^{44}v^{10}+u^{54}v^{14}-u^{56}v^{14}+u^{66}v^{18}-u^{66}v^{16}-u^{68}v^{18}-u^{74}v^{18}-u^{86}v^{22}-u^{98}v^{24})}{(1-u^{60}v^{16})(1-u^{44}v^{12})(1-u^{36}v^{10})(1-u^{20}v^4)(1-u^2)}.$$
\end{theorem}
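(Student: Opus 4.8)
The plan is to establish Theorem \ref{thm-H4Main} in two halves: first prove that the $12$ elements listed in Table \ref{tab-1} generate $R$, and then extract the Hilbert series and, as a byproduct, the Waldschmidt constant. Throughout I will identify $R$ with its image in $\C[s_2,s_6,s_{10},w]$ under $f\mapsto \overline f$, using the bigrading where $s_d$ has bidegree $(d,d)$ and $w$ has bidegree $(1,0)$. The key structural observation is that a bidegree $(d,m)$ lies in the support of $R$ precisely when there is a $G$-invariant polynomial of degree $d$ vanishing to order exactly $m$ at $p$, and that $\overline f$ always has the form (form of degree $m$ in $s_2,s_6,s_{10}$) $\cdot\, w^{d-m}$; so $R$ is abstractly a subring of $\C[s_2,s_6,s_{10}]$ with an extra grading by $d$ recording the degree the invariant came from. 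The real content is understanding, for each $m$, which degrees $d$ occur.

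First I would set up the ``degree spectrum'' bookkeeping. For a monomial $s_2^a s_6^b s_{10}^c$ of multiplicity-degree $m = 2a+6b+10c$, the possible values of $d$ for which some $G$-invariant $f$ with $\overline f$ proportional to $s_2^a s_6^b s_{10}^c w^{d-m}$ exists form a subset of $\{m, m+1, m+2, \dots\}$; the generator images in Table \ref{tab-1} show that from $f_2$ (bidegree $(2,0)$) one can always raise $d$ by $2$ while fixing $m$, and the remaining generators supply the ``minimal degree'' representatives for each leading monomial in $s_2,s_6,s_{10}$. Concretely I would argue: (i) $f_2^k$ gives bidegree $(2k,0)$, so $R$ contains $\C[w^2]$ and one may always multiply by $w^2$; (ii) the generators $f_{12}, f_{24}, f_{36}$ give the minimal-degree invariants with leading terms $s_2, s_6, s_{10}$ respectively, of bidegrees $(12,2),(24,6),(36,10)$; (iii) the remaining eight generators are exactly what is needed to realize the minimal degree for each monomial $s_2^a s_6^b s_{10}^c$ that is \emph{not} already a product of previously listed generators --- i.e., one checks that the monoid of attainable bidegrees $(d,m)$ is generated by the $12$ listed bidegrees. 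The cleanest way to organize this is to show that for each monomial $s_2^a s_6^b s_{10}^c$ the set of achievable $d$ is an arithmetic progression with common difference $2$ starting at an explicitly computed minimum $d_{\min}(a,b,c)$, and that $d_{\min}$ is subadditive in a way matching the generator list. This reduces finite generation to a finite check, most efficiently carried out in Macaulay2 (as the paper's Appendix scripts do): compute $\dim(T_d\cap I^m)$ directly for all $(d,m)$ up to a bound exceeding the Castelnuovo--Mumford-type regularity implied by the claimed Hilbert series, and verify that the subring generated by the $12$ elements already accounts for every graded piece.

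Once finite generation is in hand, the Hilbert series computation is the following mechanical step: present $R$ as a quotient of the polynomial ring $\C[t_1,\dots,t_{12}]$ on generators of the $12$ bidegrees by the ideal of relations, and read off a (bigraded) free resolution. The numerator of the claimed expression
$$1+u^{12}v^2+u^{24}v^6+u^{30}v^6+u^{32}v^8-u^{32}v^6+\cdots-u^{98}v^{24}$$
together with the denominator $(1-u^{60}v^{16})(1-u^{44}v^{12})(1-u^{36}v^{10})(1-u^{20}v^4)(1-u^2)$ indicates a presentation in which $f_2,f_{20},f_{36},f_{44},f_{60}$ (bidegrees $(2,0),(20,4),(36,10),(44,12),(60,16)$) form a homogeneous system of parameters --- these are the five factors in the denominator --- and the remaining $7$ generators, subject to a Koszul-type syzygy package, contribute the $16$-term numerator. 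I would verify this by computing the minimal free resolution of $R$ over the polynomial subring on the system of parameters in Macaulay2 and matching Betti numbers with the signs and exponents in the stated numerator; alternatively one can compute $HS(u,v)$ directly from the $\dim(T_d\cap I^m)$ data and rational-function interpolation, then confirm it equals the displayed formula. Finally, the Waldschmidt constant follows: since $\widehat\alpha(\cH_4)=\inf\{d/m : R_{d,m}\neq 0\}$ and $R$ is generated in the $12$ listed bidegrees, the infimum is attained among ratios $d/m$ of generators (any monomial in the generators has ratio a weighted mediant of the generator ratios, hence $\geq$ the minimum); comparing $2/0=\infty$, $12/2=6$, $20/4=5$, $24/6=4$, $30/6=5$, $32/8=4$, $36/10=3.6$, $42/10=4.2$, $44/12\approx3.67$, $54/14\approx3.86$, $60/16=3.75$, $66/18\approx3.67$ shows $36/10$ is smallest, giving $\widehat\alpha(\cH_4)=\tfrac{36}{10}$.

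The main obstacle is step one: proving that the $12$ elements actually generate, i.e., that no invariant of some bidegree $(d,m)$ fails to be a polynomial in the listed $\overline f$'s. In principle this requires controlling arbitrarily large $d$ and $m$, but the saving grace is that $R$ sits inside the Noetherian ring $\C[s_2,s_6,s_{10},w]$ and the extra $d$-grading is ``almost'' determined by the $s$-grading (only the power of $w$ is free), so one expects the generation to stabilize after a computable bound; making that bound explicit --- via the regularity of the module $\C[s_2,s_6,s_{10},w]/R$-ideal or simply via the denominator degrees $(60,16)$ etc. in the conjectured Hilbert series --- and then performing the finite verification is where the work lies. The rest (Hilbert series, Waldschmidt constant) is formal once generation is secured.
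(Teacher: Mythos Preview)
Your proposal is workable in outline but contains a real gap, and it misses the short argument the paper actually uses.

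The gap is in your finite-check strategy for generation. You propose to verify $\dim R'_{d,m}=\dim R_{d,m}$ (where $R'$ is the subalgebra generated by the $12$ listed elements) for all $(d,m)$ up to some bound and then conclude. But you never establish what bound suffices. Appealing to ``the Castelnuovo--Mumford-type regularity implied by the claimed Hilbert series'' is circular, since the claimed Hilbert series is exactly what you are trying to prove; and invoking regularity of some module built from $R$ does not obviously help, because $R$ is not known a priori to be finitely generated. You also slip between ``the monoid of attainable bidegrees is generated by the $12$ listed bidegrees'' and ``the ring $R$ is generated by the $12$ listed elements''; the former is strictly weaker and does not imply the latter (indeed several of the generators in Table~\ref{tab-1} have $\overline f$ equal to a nontrivial linear combination of monomials in $s_2,s_6,s_{10}$, so a monomial-by-monomial analysis of $d_{\min}(a,b,c)$ does not match the actual structure of $R$).

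The paper's proof bypasses all of this with one observation you did not use: for every $d$,
\[
\sum_{m\geq 0}\dim R_{d,m}=\dim T_d,
\]
and the right-hand side is known because $T=\C[f_2,f_{12},f_{20},f_{30}]$ is a polynomial ring. Equivalently, $HS(u,1)=\prod_{k\in\{2,12,20,30\}}(1-u^k)^{-1}$ is forced. Now $R'$ is finitely generated by construction, so Macaulay2 computes $HS'(u,v)$ directly (this is the displayed rational function). Specializing $v=1$ in that expression gives the same product, so $\sum_m\dim R'_{d,m}=\dim T_d$ for all $d$. Combined with the termwise inequality $\dim R'_{d,m}\leq\dim R_{d,m}$ coming from $R'\subseteq R$, this forces equality in every bidegree, hence $R'=R$ and $HS=HS'$. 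No bound, no regularity, no direct computation of $\dim(T_d\cap I^m)$ is needed beyond the single Macaulay2 call that produces $HS'(u,v)$. Your derivation of the Waldschmidt constant from the generator list is fine and matches Corollary~\ref{cor-waldschmidt}.
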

\begin{proof}
Let $R'\subseteq R \subset \C[s_2,s_6,s_{10},w]$ be the subalgebra generated by the 12 elements $\overline f$ in Table \ref{tab-1}.  Since $s_2,s_6,s_{10},w$ are algebraically independent, we may treat $s_2,s_6,s_{10},w$ as indeterminates where $s_d$ has bidegree $(d,d)$ and $w$ has bidegree $(1,0)$.  Then $R'$ is a finitely generated $\C$-algebra, and it can be written as a quotient of a polynomial algebra generated by 12 indeterminates of the bidgrees $(d,m)$ appearing in Table \ref{tab-1}.  Then Macaulay2 can quickly compute the Hilbert series $$HS'(u,v):= \sum_{d,m} \dim R'_{d,m} u^dv^m$$ of $R'$, and it produces the rational function in the statement of the theorem for an answer.

Because $R'_{d,m} \subseteq R_{d,m}$ for all $d,m$, we have $\dim R'_{d,m} \leq \dim R_{d,m}$ for all $d,m$.  If we put $v = 1$ in the Hilbert series $HS(u,v)$ of $R$, the coefficient of $u^d$ will be $\dim T_d$ since $$\dim T_d = \sum_{m\geq 0} \dim\left(\frac{T_d\cap I^m}{T_d\cap I^{m+1}}\right) = \sum_{m\geq 0} \dim R_{d,m}.$$ Since $\dim T_d$ is the number of degree $d$ monomials in $f_2,f_{12},f_{20},f_{30}$ we get $$HS(u,1) = \frac{1}{(1-u^{30})(1-u^{20})(1-u^{12})(1-u^2)}.$$
On the other hand, if we put $v=1$ into the Hilbert series $HS'(u,v)$ of $R'$, we likewise find $$HS'(u,1) = \frac{1}{(1-u^{30})(1-u^{20})(1-u^{12})(1-u^2)}.$$  It follows that $$\sum_{m\geq 0} \dim R'_{d,m} = \dim T_d = \sum_{m\geq 0} \dim R_{d,m}.$$ Then since $\dim R_{d,m}'\leq \dim R_{d,m}$ for all $d,m$ we conclude that $\dim R_{d,m}' = \dim R_{d,m}$ for all $d,m$.  Therefore $R' = R$ and $HS(u,v) = HS'(u,v)$.
\end{proof}

\begin{corollary}\label{cor-waldschmidt}
The Waldschmidt constant of the configuration $\cH_4$ of 60 points in $\P^3$ is $$\widehat \alpha(\cH_4) = \frac{18}{5},$$ computed by the invariant $f_{36}$ of degree $36$ which vanishes to order $10$ along the points of $\cH_4$.
\end{corollary}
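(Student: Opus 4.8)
The plan is to deduce the corollary directly from Theorem \ref{thm-H4Main}. By the averaging lemma and the discussion preceding Theorem \ref{thm: intro-H4}, the Waldschmidt constant is
$$\widehat\alpha(\cH_4) = \inf\left\{\frac{d}{m} : m\geq 1 \text{ and } R_{d,m}\neq 0\right\},$$
so the whole task is to evaluate this infimum using the fact that $R$ is generated by the twelve explicit elements of Table \ref{tab-1}.

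The key (elementary) observation to record is that for a bigraded ring generated by finitely many homogeneous elements, the infimum of $d/m$ over nonzero bidegrees $(d,m)$ with $m\geq 1$ is attained on a generator. To spell this out, write $(d_i,m_i)$, $i=1,\dots,12$, for the bidegrees in Table \ref{tab-1}, and put $c = \min\{d_i/m_i : m_i\geq 1\}$. If $R_{d,m}\neq 0$ with $m\geq 1$, then since the $\overline f$ generate $R$ there is a monomial in them of bidegree exactly $(d,m)$, hence nonnegative integers $a_i$ with $d=\sum_i a_i d_i$ and $m=\sum_i a_i m_i$; moreover $m\geq 1$ forces $a_i\geq 1$ for some $i$ with $m_i\geq 1$. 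Since $d_i\geq c\,m_i$ for every $i$ (for $m_i\geq 1$ this is the definition of $c$, and for the single generator $f_2$ with $m_i=0$ it reads $2>0$), we get $d=\sum_i a_i d_i \geq c\sum_i a_i m_i = c\,m$, so $d/m\geq c$. On the other hand the generator realizing the minimum is a nonzero element of $R$ whose bidegree has ratio $c$, so $c$ itself occurs. Hence $\widehat\alpha(\cH_4)=c$.

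It then remains only to compute $c$ from Table \ref{tab-1}. Ignoring $f_2$ (bidegree $(2,0)$), the ratios $d/m$ of the eleven generators with $m\geq 1$ are $6,\ 5,\ 4,\ 5,\ 4,\ \tfrac{18}{5},\ \tfrac{21}{5},\ \tfrac{11}{3},\ \tfrac{27}{7},\ \tfrac{15}{4},\ \tfrac{11}{3}$. The smallest is $\tfrac{18}{5}=\tfrac{36}{10}$, attained uniquely by the generator of bidegree $(36,10)$, namely $f_{36}=f_{12}^3-3f_2^2f_{12}f_{20}+2f_2^3f_{30}$, whose image $\overline f_{36}=s_{10}w^{26}$ is nonzero so that indeed $R_{36,10}\neq 0$. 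Therefore $\widehat\alpha(\cH_4)=\tfrac{18}{5}$, computed by $f_{36}$, which vanishes to order $10$ at $p$ and hence, being $G$-invariant and $\cH_4$ a single $G$-orbit, to order $10$ at every point of $\cH_4$.

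I do not expect any genuine obstacle at this stage: the only mathematical content beyond arithmetic is the mediant inequality above, and all the real difficulty — proving that the twelve listed elements actually generate $R$, via the identification $R\hookrightarrow\C[s_2,s_6,s_{10},w]$ and the matching of Hilbert series — has already been carried out in Theorem \ref{thm-H4Main}.
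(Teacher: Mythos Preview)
Your proposal is correct and follows essentially the same approach as the paper: both argue that the bidegree of any nonzero element of $R$ is a nonnegative integer combination of the generator bidegrees, so the minimal ratio $d/m$ is achieved at the generator $(36,10)$, and $f_{36}$ furnishes the matching upper bound. The paper phrases this as ``the positive real cone spanned by the bidegrees of the generators is spanned by $(2,0)$ and $(36,10)$,'' while you spell out the underlying mediant-type inequality explicitly; the content is the same.
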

\begin{proof}
The positive real cone spanned by the bidegrees $(d,m)$ of the generators of $R$ is spanned by $(2,0)$ and $(36,10)$.  The bidegree of any element of $R$ then lies in this cone, so $T_d \cap I^m$ is empty if $\frac{d}{m}<\frac{36}{10}$.  Thus $\widehat\alpha(\cH_4)\geq \frac{36}{10}$, and the existence of $f_{36}$ gives the other inequality.\end{proof}

While Theorem \ref{thm-H4Main} gives an explicit generating function for the dimension of $(T_d\cap I^m)/(T_d\cap I^{m+1})$, we can also easily visualize these dimensions for relatively small $d$ and $m$.  For a fixed $m$, if $d$ is large enough then the inclusion $$\frac{T_d\cap I^m}{T_d\cap I^{m+1}} \to \C[s_2,s_6,s_{10},w]_{d,m}=\C[s_2,s_6,s_{10}]_m \cdot w^{d-m}$$ becomes an isomorphism.  This is true because the invariants $f_2,f_{12},f_{24},f_{36}$ map to $w^2,s_2w^{10},s_6w^{18}$, and $s_{10}w^{26}$, respectively.  Thus if $d$ is large enough, it is $\dim \C[s_2,s_6,s_{10}]_m$ independent conditions on forms in $T_d\cap I^m$ to vanish to order $m+1$.  We also have inclusions $$\frac{T_d\cap I^m}{T_{d}\cap I^{m+1}} \to \frac{T_{d+2} \cap I^m}{T_{d+2}\cap I^{m+1}}$$ given by multiplication by $f_2$, so for fixed $m$ and even $d$ the dimension $\dim R_{d,m}$ increases with $d$.  Whenever $\dim R_{d,m} > \dim R_{d-2,m},$ the difference $\dim R_{d,m}-\dim R_{d-2,m}$ measures the number of additional independent conditions it is for a degree $d$ form to vanish to order $m+1$ compared to the degree $d-2$ case.  For each $m$, we can record the degrees $d$ where this difference is positive, and we record $d$ a total number of $\dim R_{d,m}-\dim R_{d-2,m}$ times.  We know we can stop once $\dim R_{d,m} = \dim \C[s_2,s_6,s_{10}]_m.$  This process generates Table 2, and the next corollary follows.

\begin{corollary}\label{cor-conditions}
The codimension of $T_d \cap I^m$ in $T_d$ is the number of times an integer $\leq d$ appears on the right side of Table \ref{tab-2} in rows $0,2,\ldots,m-2$.

The dimension of $T_d\cap I^m$ is the number of times an integer $\leq d$ appears on the right side of Table \ref{tab-2} in rows $m,m+2,\ldots.$
\end{corollary}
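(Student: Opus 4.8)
The plan is to unwind the construction of Table~\ref{tab-2} and reduce the statement to two elementary telescoping sums, using Theorem~\ref{thm-H4Main} purely as a black box.  First I would record the parity constraints that make everything finite: $T_d=0$ unless $d$ is even, since $f_2,f_{12},f_{20},f_{30}$ all have even degree, and $R_{d,m}=0$ unless $m$ is even, since the order of vanishing of an invariant at $p$ is even.  Thus every sum below runs over even indices, the rows of Table~\ref{tab-2} and the entries it records are all even, and the ranges ``$0,2,\dots,m-2$'' and ``$m,m+2,\dots$'' are to be read as the even-indexed rows; it is then harmless to take $d$ and $m$ even in the statement.  Next, because $T_d\cap I^m=0$ for fixed $d$ and $m$ large, the identity $\dim(T_d\cap I^{m_0})-\dim(T_d\cap I^{m_0+1})=\dim R_{d,m_0}$ telescopes to
$$\dim(T_d\cap I^{m_0})=\sum_{m\ge m_0}\dim R_{d,m},\qquad\text{in particular}\qquad \dim T_d=\sum_{m\ge 0}\dim R_{d,m}.$$
Subtracting, the codimension of $T_d\cap I^m$ in $T_d$ equals $\sum_{0\le m'<m}\dim R_{d,m'}$, while $\dim(T_d\cap I^m)=\sum_{m'\ge m}\dim R_{d,m'}$.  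So the whole corollary comes down to proving that, for each even $m'$, the number $\dim R_{d,m'}$ equals the number of entries $\le d$ recorded in row $m'$ of Table~\ref{tab-2}.

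To establish that, I would fix an even $m'$ and revisit the construction of row $m'$.  Multiplication by $f_2$ gives injections $R_{d,m'}\hookrightarrow R_{d+2,m'}$, so $d\mapsto\dim R_{d,m'}$ is nondecreasing along even $d$; it vanishes for $d<m'$, and because $f_2,f_{12},f_{24},f_{36}$ map to $w^2,\,s_2w^{10},\,s_6w^{18},\,s_{10}w^{26}$, the inclusion $R_{d,m'}\hookrightarrow\C[s_2,s_6,s_{10}]_{m'}\cdot w^{d-m'}$ becomes an isomorphism for $d$ large, so the sequence stabilizes at $\dim\C[s_2,s_6,s_{10}]_{m'}$.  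By definition, row $m'$ of Table~\ref{tab-2} lists each even $d'$ exactly $\dim R_{d',m'}-\dim R_{d'-2,m'}$ times, a nonnegative integer which is positive for only finitely many $d'$.  Telescoping once more, the number of entries in that row which are $\le d$ is $\sum_{d'\le d}\bigl(\dim R_{d',m'}-\dim R_{d'-2,m'}\bigr)=\dim R_{d,m'}$, since the partial sums telescope and $\dim R_{d',m'}=0$ for $d'$ small.

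Finally I would substitute this identity into the two sums above: the codimension of $T_d\cap I^m$ in $T_d$ becomes the number of integers $\le d$ appearing in rows $0,2,\dots,m-2$ of Table~\ref{tab-2}, and $\dim(T_d\cap I^m)$ becomes the number appearing in rows $m,m+2,\dots$, which is exactly the claim.  I do not anticipate any real obstacle: the argument is pure bookkeeping resting on Theorem~\ref{thm-H4Main}, which already makes every $\dim R_{d,m}$ effectively computable.  The only points that genuinely need care are that the construction of Table~\ref{tab-2} is itself a telescoping sum and that it terminates --- both of which are consequences of the monotonicity of $d\mapsto\dim R_{d,m'}$ under multiplication by $f_2$ and of its stabilization at $\dim\C[s_2,s_6,s_{10}]_{m'}$ --- together with the parity observations noted at the outset.
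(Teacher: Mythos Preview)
Your proposal is correct and is essentially the same argument the paper has in mind: the paper does not give a separate proof of the corollary but states it as following from the discussion preceding it, and your write-up is precisely a careful unpacking of that discussion---the telescoping identity $\dim(T_d\cap I^{m_0})=\sum_{m\ge m_0}\dim R_{d,m}$, the monotonicity and eventual stabilization of $d\mapsto\dim R_{d,m'}$ under multiplication by $f_2$, and the observation that the entries in row $m'$ of Table~\ref{tab-2} telescope to $\dim R_{d,m'}$. Your explicit treatment of the parity constraints (even $d$, even $m$) is a welcome clarification that the paper leaves implicit.
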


\begin{example}
From Corollary \ref{cor-conditions} and Table \ref{tab-2}, we see that it is 25 independent linear conditions on forms of degree $72$ to vanish to order $20$ at $p$.  Since $\dim T_{72} = 26$,  there is up to scale a unique form of degree $72$ vanishing to order $20$ at $p$, namely $f_{36}^2$.
\end{example}

\begin{table}$$\renewcommand\arraystretch{1.2}
\begin{array}{c|ccccccccccccc}
m &\multicolumn{13}{c}{\textrm{degrees }d\textrm{ with }\dim R_{d,m}- \dim R_{d-2,m} >0} \\ \hline
0 & 0\\
2 & \underline{12}\\
4 & \underline{20}\\
6 & \underline{24} & \underline{30}\\
8 & \underline{32} & 40\\
10 & \underline{36} & \underline{42} & 50\\
12 & \underline{44} & 48 & 52 & 60\\
14 & \underline{54} & 56 & 62 & 70\\
16 & \underline{60} & 60 & 64 & 72 & 80\\
18 & \underline{66} & 68 & 72 & 74 & 82 & 90\\
20 &  72 &76 &78 &80 &84 &92 &100\\
22  &80 &84 &84 &86 &90 &94 &102 &110 \\
24  &88 &90 &92 &92 &96 &100 &104 &112 &120 \\
26  &96 &96 &98 &100 &102 &106 &110 &114 &122 &130 \\
28  &102 &104 &104 &108 &108 &112 &116 &120 &124 &132 &140 \\
30  &108 &110 &112 &114 &114 &116 &118 &122 &126 &130 &134 &142 &150 \\ 
\end{array}$$
\caption{For fixed $m$, we list the degrees $d$ where $\dim R_{d,m} - \dim R_{d-2,m}>0$. If this difference is more than $1$, we list $d$ that many times. See Corollary \ref{cor-conditions}.  Degrees corresponding to generators of $R$ are underlined.}\label{tab-2}\end{table}

\section*{Appendix: Macaulay2 code}

In this section we include Macaulay2 scripts that are used to verify the details of the computations in Section \ref{sec: H4}.  The first scripts input the group $G$ of order 14,400 into M2 and construct the fundamental invariant polynomials $f_2,f_{12},f_{20},f_{30}$.

\begin{verbatim}
loadPackage "InvariantRing"

--base ring
K = toField(QQ[c]/(c^2-5/4))
S = K[x,y,z,w]
a = c+1/2
I = ideal(x,y,z)

--group generators
m1 = matrix{{-1,0,0,0},{0,1,0,0},{0,0,1,0},{0,0,0,1}}
m2 = matrix{{1,0,0,0},{0,-1,0,0},{0,0,1,0},{0,0,0,1}}
m3 = -1/2 * matrix{{-a,1-a,1,0},{1-a,-1,-a,0},{1,-a,a-1,0},{0,0,0,-2}}
m4 = matrix{ {0,0,-1,0},{0,0,0,1},{1,0,0,0},{0,-1,0,0}}

--construct the group of order 14,400
glist = {m1,m2,m3,m4};
G = group finiteAction(glist,S); -- matrix group
#G  --verify 14,400 elements

--construct the fundamental invariants f2, f12, f20, f30
f2 = x^2+y^2+z^2+w^2;
mySum = 0_S;
for i from 0 to 14399 do (
    mySum = mySum + (G_i_(0,0)*x+G_i_(0,1)*y+G_i_(0,2)*z+G_i_(0,3)*w)^12);
g12=mySum*64/29925;
f12 = (f2^6-g12)*19/4;
g20 = diff(f2^2,f12^2)/120;
f20 = (-3*f2^10-224*f2^4*f12+3*g20)*5/968; --degree 20, vanish 4
g30 = diff(f2,f12*f20)/42;
f30 = (-15*f2^5*f20-6*f2^3*f12^2+21*g30)/40; --degree 30, vanish 6
\end{verbatim}

Next we introduce the additional $G$-invariant polynomials $f_d$ with high orders of vanishing which are necessary to generate the ring $R$ in Section \ref{ssec: Rgens}.

\begin{verbatim}
--G-invariant polynomials with high order of vanishing
f24 = (f12^2-f2^2*f20)/-3; 
f32 = (f12*f20-f2*f30)*-2/3; 
f36 = (f12^3-3*f2^2*f12*f20+2*f2^3*f30)*5/27; 
f42 = (-f2*f20^2+f12*f30)*-2/3; 
f44 = (f12^2*f20+f2^2*f20^2-2*f2*f12*f30)*10/18; 
f54 = (2*f2*f12*f20^2-f12^2*f30-f2^2*f20*f30)*10/9; 
f60 = (f20^3-f30^2)*20/27; 
f66 = (-3*f2*f12^2*f20^2+f2^3*f20^3+f12^3*f30+3*f2^2*f12*f20*f30-2*f2^3*f30^2)*10/27; 
\end{verbatim}

We now introduce the fundamental invariants $s_2,s_6,s_{10}$ for the stablizer $G_p$ and check that the polynomials $f_d$ have images $\overline f_d$ as described in Table \ref{tab-1}.

\begin{verbatim}   
--fundamental generators of stabilizer invariants
use S;
s2 = sub(f12 % I^3,w=>1);
s6 = sub(f24 % I^7,w=>1);
s10 = sub(f36 % I^11,w=>1);

--hi is fi bar, the image of fi in the associated graded ring R
h2 = f2 % I^1;  --deg (2,0)
h12 = f12 % I^3; --deg (12,2)
h20 = f20 % I^5; --deg (20,4)
h24 = f24 % I^7; --deg (24,6)
h30 = f30 % I^7; --deg (30,6)
h32 = f32 % I^9; --deg (32,8)
h36 = f36 % I^11; --deg (36,10)
h42 = f42 % I^11; --deg (42,10)
h44 = f44 % I^13; --deg (44,12)
h54 = f54 % I^15; --deg (54,14)
h60 = f60 % I^17; --deg (60,16)
h66 = f66 % I^19; --deg (66,18)

--verify the expressions for fi bar given in Table 1
h2 == w^2
h12 == s2 * w^10
h20 == s2^2 * w^16
h24 == s6 * w^18
h30 == s2^3 * w^24
h32 == s2 * s6 * w^24
h36 == s10 * w^26
h42 == s2^2 * s6 * w^32
h44 == (-3*s2*s10+5*s6^2)*w^32
h54 == (-6*s2^2*s10+5*s2*s6^2)*w^40
h60 == (-4*s2^3*s10+5*s2^2*s6^2)*w^44
h66 == (-9*s2*s6*s10+10*s6^3)*w^48
\end{verbatim}

We next compute the Hilbert series of the ring $R'$ in Theorem \ref{thm-H4Main}.

\begin{verbatim}
stabAlg = K[t2,t6,t10,u,Degrees=>{{2,2},{6,6},{10,10},{1,0}}];
polyAlg = K[j2,j12,j20,j30,j24,j32,j36,j42,j44,j54,j60,j66,
    Degrees=>{{2,0},{12,2},{20,4},{30,6},{24,6},{32,8},{36,10},
        {42,10},{44,12},{54,14},{60,16},{66,18}}];
phi = map(stabAlg,polyAlg,
    {u^2,
    t2*u^10,
    t2^2*u^16,
    t2^3*u^24,
    t6*u^18,
    t2*t6*u^24,
    t10*u^26,
    t2^2*t6*u^32,
    (-3*t2*t10+5*t6^2)*u^32,
    (-6*t2^2*t10+5*t2*t6^2)*u^40,
    (-4*t2^3*t10+5*t2^2*t6^2)*u^44,
    (-9*t2*t6*t10+10*t6^3)*u^48});
J = ker phi;
R' = polyAlg/J;
reduceHilbert hilbertSeries(R')
\end{verbatim}

The output of the last command is the rational function appearing in Theorem \ref{thm-H4Main}. Finally, the following script generates the data in Table \ref{tab-2}.

\begin{verbatim}
condsRing = K[w2,w6,w10,Degrees=>{2,6,10}]
for l from 1 to 15 do(
    k = 2*l;
    << k << "  ";
    conds = hilbertFunction(k,condsRing);
    lastHilb = 0;
    d=12;      
    while lastHilb<conds do(
        currentHilb=hilbertFunction({d,k},R');  
        while(currentHilb>lastHilb) do(
            << "& " << d << " ";
            lastHilb=lastHilb+1;
        );
        d=d+2;
    );
    << "\\\\" << endl << flush;
)
\end{verbatim}


\begin{thebibliography}{10}

\bibitem{BDHHSS2019}
T.~Bauer, S.~Di~Rocco, B.~Harbourne, J.~Huizenga, A.~Seceleanu, and T.~Szemberg.
\newblock {Negative Curves on Symmetric Blowups of the Projective Plane, Resurgences, and Waldschmidt Constants}.
\newblock {\em International Mathematics Research Notices}, 2019(24):7459--7514, 02 2018.

\bibitem{BisuiNguyen2024}
S.~Bisui and T.~T. Nguyen.
\newblock Lowers bounds for {W}aldschmidt constants and {D}emailly's {C}onjecture for general and very general points.
\newblock {\em to appear in Collect. Math.}, 2025.

\bibitem{Calvo2024}
S.~Calvo.
\newblock The icosahedral line configuration and {Waldschmidt} constants.
\newblock {\em J. Pure Appl. Algebra}, 228(6):13, 2024.
\newblock Id/No 107563.

\bibitem{Many2022}
J.~Camarneiro, B.~Drabkin, D.~Fragoso, W.~Frendreiss, D.~Hoffman, A.~Seceleanu, T.~Tang, and S.~Yang.
\newblock Convex bodies and asymptotic invariants for powers of monomial ideals.
\newblock {\em J. Pure Appl. Algebra}, 226(10):21, 2022.
\newblock Id/No 107089.

\bibitem{ChangJow2020}
Y.-L. Chang and S.-Y. Jow.
\newblock Demailly's conjecture on {Waldschmidt} constants for sufficiently many very general points in {{\(\mathbb{P}^n\)}}.
\newblock {\em J. Number Theory}, 207:138--144, 2020.

\bibitem{POLITUS1}
L.~Chiantini, {\L }.~Farnik, G.~Favacchio, B.~Harbourne, J.~Migliore, T.~Szemberg, and J.~Szpond.
\newblock Configurations of points in projective space and their projections, arXiv:2209.04820.

\bibitem{Descartes}
R.~Descartes.
\newblock {\em La g{\'e}om{\'e}trie}.
\newblock Chez Charles Angot, 1664.

\bibitem{DSS2024}
M.~Dumnicki, T.~Szemberg, and J.~Szpond.
\newblock Waldschmidt constants in projective spaces.
\newblock {\em J. Algebra}, 639:1--22, 2024.

\bibitem{Eisenbud}
D.~Eisenbud.
\newblock {\em Commutative algebra. {With} a view toward algebraic geometry}, volume 150 of {\em Grad. Texts Math.}
\newblock Berlin: Springer-Verlag, 1995.

\bibitem{Euclid}
Euclid.
\newblock {\em The Elements of Euclid, edited by Sir Thomas L. Heath}.
\newblock Dover Publications Inc., New York, 2nd edition, 1956.

\bibitem{GHM2013}
A.~V. Geramita, B.~Harbourne, and J.~Migliore.
\newblock Star configurations in {{\(\mathbb{P}^n\)}}.
\newblock {\em J. Algebra}, 376:279--299, 2013.

\end{thebibliography}

\end{document}